\theoremstyle{plain}
\newtheorem{theorem}{Theorem}[section]
\newtheorem{corollary}[theorem]{Corollary}
\newtheorem{proposition}[theorem]{Proposition}
\newtheorem{lemma}[theorem]{Lemma}
\theoremstyle{definition}
\newtheorem*{theorem1}{Theorem 4.7}
\newtheorem*{theorem2}{Theorem 5.6}
\theoremstyle{remark}
\newtheorem{remark}[theorem]{Remark}
\numberwithin{equation}{section}\theoremstyle{plain}
\newcommand{\I}{\mathcal{I}}
\newcommand{\B}{{\mathcal B}}
\newcommand{\C}{{\mathcal C}}
\newcommand{\D}{{\mathcal D}}
\newcommand{\Zz}{{\mathbb Z}}
\newcommand{\E}{{\mathcal E}}
\newcommand{\U}{{\mathcal U}}
\newcommand{\Rep}{\operatorname{Rep}}
\newcommand\Aut{\operatorname{Aut}}
\newcommand\Irr{\operatorname{Irr}}
\newcommand\FPdim{\operatorname{FPdim}}
\newcommand\vect{\operatorname{Vect}}
\newcommand\svect{\operatorname{sVect}}
\newcommand\id{\operatorname{id}}
\begin{document}
\title[Almost square-free modular categories]{On the classification of almost
square-free modular categories}
\author{Jingcheng Dong}
\address[Jingcheng Dong]{College of Engineering, Nanjing Agricultural
University, Nanjing 210031, China}
\email{dongjc@njau.edu.cn}

\author{Sonia Natale}
\address[Sonia Natale]{Facultad de Matem\'atica, Astronom\'\i a y F\'\i sica.
Universidad Nacional de C\'ordoba. CIEM -- CONICET. Ciudad
Universitaria. (5000) C\'ordoba, Argentina}
\email{natale@famaf.unc.edu.ar
\newline \indent \emph{URL:}\/ http://www.famaf.unc.edu.ar/$\sim$natale}

\keywords{Braided fusion category; modular category; group-theoretical fusion
category; braided $G$-crossed fusion category;
Tannakian category}

\subjclass[2010]{18D10; 16T05}

\date{November 1, 2017}

\begin{abstract}
Let $\C$ be a modular category of Frobenius-Perron dimension $dq^n$, where $q >
2$ is a  prime number and $d$ is a square-free integer. We show that $\C$ must
be integral and nilpotent and therefore group-theoretical. In the case where $q
= 2$, we describe the structure of $\C$ in terms of equivariantizations of
group-crossed braided fusion categories.
\end{abstract}

\maketitle

\section{Introduction}
\emph{Almost square-free} (ASF) fusion categories are a class of fusion
categories whose Frobenius-Perron dimensions factor simply. Recall that an ASF
fusion category is a fusion category of Frobenius-Perron dimension $dq^n$, where
$d$ is a square-free integer, $q$ is a prime number, $n$ is a non-negative
integer and ${\rm gcd}(q,d)=1$.

\medbreak
We shall work over an algebraically closed field $k$ of characteristic zero.
Recall that a \emph{modular} category is a braided fusion category with a ribbon
structure satisfying certain non-degeneracy condition (see Subsection
\ref{sec2.3} for a precise definition of this notion). Modular categories are
a relevant class of fusion categories due to their applications in areas like
low dimensional topology and conformal field theory. 

\medbreak 
The problem of classifying ASF modular categories has been addressed in several
papers. It was shown in \cite{drinfeld2007group} that every braided fusion
category of Frobenius-Perron dimension $q^n$ is group-theoretical. The
second-named author showed in \cite[Corollary 7.3]{witt-wgt} that every ASF
modular category is solvable. For distinct prime numbers $q > 2$, $p$ and
$r$, results on the structure of modular categories of Frobenius-Perron
dimensions $pq^n$, $pqr$, $4d$ and $8d$, where $1\leq n \leq 5$ and $d$ is an
odd natural number, were obtained in   \cite{ego, naidu-rowell,
bruillard2013classification, bpr, 2016DongIntegral,DoTu2015}.

In the present paper our first main result gives a nearly complete
classification of these modular categories. We prove the following theorem:

\begin{theorem1}
Let $q>2$ be a prime number and let $d$ be a square-free integer. Suppose that
$\C$ is a modular category such that $\FPdim \C = dq^n$, $n \geq 0$. Then $\C$
is integral and nilpotent.
\end{theorem1}

As an application of Theorem \ref{main} we show that an ASF modular category
of Frobenius-Perron dimension $dq^4$, where $q$ is a odd prime number not
dividing $d$ is necessarily pointed. See Corollary \ref{pq4}.

\medbreak 
By \cite{drinfeld2007group}, every integral nilpotent braided fusion category is
group-theoreti\-cal. Therefore Theorem \ref{main} implies that any ASF modular
category of the specified dimension, namely, except in the $q = 2$ integral
case,  is group-theoretical, entailing the classification of such modular
categories in group-theoretical terms, as explained below.

\medbreak
Group-theoretical fusion categories are important examples of fusion categories
whose objects have integer Frobenius-Perron dimensions. They are also
interesting examples of fusion categories that can be described explicitly in
terms of finite groups and their cohomology \cite{etingof2005fusion}.

\medbreak
Let $G$ be a finite group, $H \leq G$ a subgroup of $G$, $\omega:G\times G\times
G \to k^{\times}$ a normalized 3-cocycle, and $\psi: H \times H \to k^{\times}$ 
a normalized 2-cochain such that $\mathrm{d}\psi = \omega |_H$. Consider the
fusion category $\vect_G^\omega$ of finite dimensional $G$-graded vector spaces
with associativity given by 3-cocycle $\omega$. Then the twisted group algebra
$k_\psi[H]$ is  an associative algebra in $\vect_G^\omega$.
Thus the category
$\C(G, \omega, H, \psi)$ of $k_\psi[H]$-bimodules in $\vect_G^\omega$ is a
fusion category with tensor product $\otimes_{k_\psi[H]}$ and unit object
$k_\psi[H]$. A fusion category is called \emph{group-theoretical} if it is
equivalent to one of the form $\C(G, \omega, H, \psi)$ \cite[Section
8.8]{etingof2005fusion}.

\medbreak
Let $\C$ be a braided group-theoretical fusion category.
The braiding of $\C$ yields a canonical embedding of $\C$ into its Drinfeld
center $\mathcal{Z}(\C)$. Following the proof of \cite[Theorem
1.2]{natale2003group}, $\mathcal{Z}(\C)$ is equivalent to
$\mathcal{Z}(\vect_G^\omega)\cong\Rep(D^\omega G)$
for some finite group $G$ and $3$-cocycle $\omega$ on $G$, where $\Rep(D^\omega
G)$ is the representation category of the twisted quantum double $D^\omega G$
\cite{dpr}. Therefore, every braided group-theoretical fusion category may be
realized as a fusion subcategory of $\Rep(D^\omega G)$. One of the main results
of \cite{naidu2009fusion} describes all fusion subcategories of $\Rep(D^\omega
G)$. Therefore this result gives a complete description of all braided
group-theoretical fusion categories in group-theoretical terms. In the light of
these observations, (braided) group-theoretical fusion categories are well
understood.

An approach towards an explicit parameterization of the modular categories in
Theorem \ref{main} is discussed in Remark \ref{explicit}. 

\medbreak
If $\C$ is an integral ASF modular category of dimension $dq^n$ then $\C$ can be
obtained by a $G$-equivariantization from a nilpotent fusion category of
nilpotency class $2$ \cite{2016DongIntegral}, where $G$ is a $q$-group. Our
second task of this paper is to describe the structure of $\C$ when $\C$ is
\emph{strictly weakly integral}; that is, when there exists at least a simple
object $X$ such that $\FPdim X$ is not an integer.

Suppose that $\C$ is a modular category containing a Tannakian subcategory 
$\E \cong \Rep G$,
where $G$ is a finite group. Then $\C$ can be obtained as a
$G$-equivariantization of a braided $G$-crossed fusion category $\oplus_{g\in
G}\D_g$; see Subsection \ref{2.4} for a discussion of this fact.

Our second main result is the following theorem.

\begin{theorem2}
Let $\C$ be a strictly weakly integral ASF modular category. Then $\C$ fits into
one of the following classes:

(1)\, $\C$ is equivalent to a Deligne tensor product $\I\boxtimes\B$, where $\I$
is an Ising fusion category and $\B$ is a pointed modular  category.

(2)\, $\C$ is equivalent to a $G$-equivariantization of a braided $G$-crossed 
fusion category
$\oplus_{g\in G}\D_g$, where $G$ is a $2$-group and $\D_e$ is a pointed modular 
category.

(3)\, $\C$ is equivalent to a $G$-equivariantization of
a braided $G$-crossed  fusion category
$\oplus_{g\in G}\D_g$, where $G$ is a $2$-group and $\D_e\cong\I\boxtimes\B$,
where $\I$ is an Ising fusion category and $\B$ is a pointed modular  category.
\end{theorem2}

Recall that an \emph{Ising} braided category is a  non-pointed braided fusion
category of Frobenius-Perron dimension $4$. Every Ising braided category is
non-degenerate.  See \cite[Appendix B]{drinfeld2010braided} for a classification
of Ising braided categories.

\medbreak This paper is organized as follows. In Section \ref{sec2}, we recall
some basic results and prove some basic lemmas which will be used throughout. In
Section \ref{sec3}, we determine an upper bound for the order of the dimensional
grading group of a weakly integral modular category. This result will be used in
Sections \ref{sec4} and \ref{ASF_fu_ca}. In Section \ref{sec4}, we study the
nilpotency of an integral ASF modular category and give a proof of Theorem
\ref{main}. In Section \ref{ASF_fu_ca}, we study the structure of a strictly
weakly integral ASF modular category and give a proof of Theorem \ref{symm}.

\section{Preliminaries}\label{sec2}
The category of finite dimensional vector spaces
over $k$ will be denoted by $\vect$. All tensor categories will be assumed to be
strict, unless explicitly stated.
We refer the reader to \cite{etingof2005fusion}, \cite{etingof2011weakly},
\cite{drinfeld2010braided} for the notions on fusion categories and braided
fusion categories used throughout.

\subsection{Frobenius-Perron dimension}\label{sec2.1} Let $\C$ be a fusion
category. The
Frobenius-Perron dimension $\FPdim X$ of a
simple object $X \in \C$ is defined as the Frobenius-Perron eigenvalue of
the matrix of left multiplication by the class of $X$ in the basis $\Irr(\C)$ of
the Grothendieck ring of $\C$ consisting of isomorphism classes of simple
objects. The Frobenius-Perron dimension of $\C$ is $\FPdim \C =
\sum_{X \in \Irr(\C)} (\FPdim X)^2$. The fusion category $\C$ is called
\emph{integral}  if
$\FPdim X$ is a natural number, for all simple object $X \in \C$, and it is
called
\emph{weakly integral} if $\FPdim \C$ is a natural number. We shall say that
$\C$ is \emph{strictly weakly integral} if it is weakly integral but not
integral.

\subsection{Nilpotency of a fusion category}\label{sec2.2} Let $G$ be
a finite group and let $\C$ be a fusion category. A $G$-grading on a fusion
category $\C$ is a decomposition $\C =
\oplus_{g\in G} \C_g$, such that $\C_g \otimes \C_h \subseteq \C_{gh}$ and
$\C_g^* \subseteq \C_{g^{-1}}$, for all $g, h \in G$.
The fusion category $\C$ is called a \emph{$G$-extension} of a
fusion category $\D$ if there is a faithful grading $\C = \oplus_{g\in G} \C_g$
with neutral component $\C_e \cong \D$.

If $\C$ is any fusion category, there exists a finite group $\U(\C)$, called the
\emph{universal grading group} of $\C$, and a canonical faithful grading $\C =
\oplus_{g \in \U(\C)}\C_g$, with neutral component $\C_e = \C_{ad}$, where
$\C_{ad}$ is the \emph{adjoint subcategory} of $\C$, that is, the fusion
subcategory generated by $X\otimes X^*$, where $X$ runs over the simple objects
of $\C$.

\medbreak The \emph{descending central series} of $\C$ is the series of fusion
subcategories
\begin{equation}
\dots \subseteq \C^{(n+1)} \subseteq \C^{(n)} \subseteq \dots \subseteq \C^{(1)}
\subseteq \C^{(0)} = \C,
\end{equation}
where, for each $n \geq 0$, $\C^{(n+1)} = (\C^{(n)})_{ad}$.

\medbreak The fusion category $\C$ is \emph{nilpotent} if there exists $n \geq
0$ such that $\C^{(n)} \cong \vect$. Equivalently, $\C$ is nilpotent if there
exist a
sequence of fusion categories $\vect = \C_0 \subseteq \C_1 \dots \subseteq \C_n
= \C$, and a sequence of finite
groups $G_1, \dots, G_n$, such that for all $i = 1, \dots, n$, $\C_i$
is a $G_i$-extension of $\C_{i-1}$.

\subsection{Braided fusion categories}\label{sec2.3}
A braided fusion category is a fusion category endowed with a braiding, that is,
a
natural isomorphism $c_{X,Y} : X \otimes Y \rightarrow Y \otimes X$, $X, Y \in
\C$, subject to the hexagon axioms.

\medbreak Let $\C$ be a braided fusion category. If $\D$ is a fusion subcategory
of  $\C$,
the M\" uger centralizer $\D'$ of $\D$ in $\C$ is the full fusion
subcategory generated by objects $X \in \C$ such that $c_{Y, X}c_{X, Y} =
\id_{X \otimes Y}$, for all objects $Y \in \D$.

The M\" uger (or symmetric) center of $\C$ is the M\" uger centralizer
$\C'$. The category $\C$ is called \emph{symmetric} if $\C' = \C$.

\medbreak
Let $G$ be a finite group. The fusion category    $\Rep G$ of
finite dimensional representations of $G$ is a
symmetric fusion category with respect to the canonical braiding.
A braided fusion category $\E$ is called Tannakian, if $\E \cong \Rep G$ for
some finite group $G$, as symmetric fusion categories.

\medbreak  Every symmetric fusion category $\C$ is super-Tannakian, that is,
there exist a finite
group $G$ and a central element $u \in G$ of order $2$, such that $\C$ is
equivalent to
the category $\Rep(G, u)$ of representations of $G$ on finite-dimensional
super-vector spaces where $u$ acts as the parity operator.

Hence if $\C \cong \Rep(G, u)$ is a symmetric fusion category, then $\E = \Rep
G/u$ is the unique
Tannakian subcategory of $\C$ such that $\FPdim \E = \FPdim \C / 2$. Thus if
$\FPdim \C$ is bigger than $2$, then $\C$ necessarily contains a Tannakian
subcategory, and a non-Tannakian symmetric fusion category of Frobenius-Perron
dimension $2$ is equivalent to the category $\svect$ of finite-dimensional
super-vector spaces. See \cite[Subsection 2.12]{drinfeld2010braided}.

If $\C$ is any braided fusion category, its M\" uger center $\C'$ is a symmetric
fusion
subcategory of $\C$. $\C$ is  called
\emph{non-degenerate} (respectively, \emph{slightly degenerate})
if $\C' \cong \vect$ (respectively, if $\C' \cong \svect$). A  \emph{modular}
category is a non-
degenerate braided fusion category with a ribbon structure. By \cite[Proposition
8.23, Proposition 8.24]{etingof2005fusion}, a weakly integral braided fusion
category is modular if and only if it is non-degenerate.

\medbreak Suppose $\C$ is a braided nilpotent fusion category. It is shown in
\cite[Theorem 1.1]{drinfeld2007group} that $\C$ admits a unique decomposition
into a tensor product of braided fusion categories
\begin{equation}\label{dec-bnilp} \C_{p_1} \boxtimes \dots \boxtimes \C_{p_n},
\end{equation}
where for all $1\leq i \leq n$, $p_i$ is a prime number such that the
Frobenius-Perron dimension of $\C_{p_i}$ is a power of $p_i$ and the primes
$p_1, \dots, p_n$ are pairwise distinct.

\begin{remark}\label{rmk-mod-dec} Consider the decomposition \eqref{dec-bnilp}.
If $\C$ is non-degenerate, then the categories $\C_{p_1}, \dots, \C_{p_n}$ are
non-degenerate as well. In fact, for all $1\leq i \leq n$, we have $\FPdim \C =
\FPdim \C_{p_i} \FPdim \C_{p_i}'$. Therefore $\FPdim \C_{p_i}$ and $\FPdim
\C_{p_i}'$ are relatively prime. Hence the M\" uger center of $\C_{p_i}$, which
coincides with $\C_{p_i} \cap \C_{p_i}'$ must be trivial, that is, $\C_{p_i}$ is
non-degenerate.
\end{remark}

\medbreak
We next prove some results that will be needed in the course of the proof of
Theorem \ref{main}.

\begin{lemma}\label{gen-nil} Let $\C_1$, $\C_2$ be fusion subcategories of a
braided fusion category $\C$ and let $\C_1 \vee \C_2$ be the fusion subcategory
generated by $\C_1$ and $\C_2$. Then $(\C_1\vee \C_2)^{(n)} = \C_1^{(n)}\vee
\C_2^{(n)}$, for all $n \geq 0$. \end{lemma}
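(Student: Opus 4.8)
The plan is to argue by induction on $n$, reducing the statement to the single identity $(\D_1 \vee \D_2)_{ad} = (\D_1)_{ad} \vee (\D_2)_{ad}$ for arbitrary fusion subcategories $\D_1, \D_2$ of a braided fusion category. The case $n = 0$ is immediate, since $(\C_1 \vee \C_2)^{(0)} = \C_1 \vee \C_2 = \C_1^{(0)} \vee \C_2^{(0)}$. Assuming the asserted identity for some $n \geq 0$, the recursive definition $\C^{(n+1)} = (\C^{(n)})_{ad}$ gives $(\C_1 \vee \C_2)^{(n+1)} = ((\C_1 \vee \C_2)^{(n)})_{ad} = (\C_1^{(n)} \vee \C_2^{(n)})_{ad}$, while $\C_1^{(n+1)} \vee \C_2^{(n+1)} = (\C_1^{(n)})_{ad} \vee (\C_2^{(n)})_{ad}$. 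Thus, setting $\D_i = \C_i^{(n)}$, it suffices to prove $(\D_1 \vee \D_2)_{ad} = (\D_1)_{ad} \vee (\D_2)_{ad}$.

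First I would record the structural description of $\D_1 \vee \D_2$ afforded by the braiding: every simple object of $\D_1 \vee \D_2$ is a direct summand of a tensor product $X \otimes Y$ with $X \in \Irr(\D_1)$ and $Y \in \Irr(\D_2)$. Indeed, the full abelian subcategory spanned by such summands contains $\D_1$ (take $Y = \1$) and $\D_2$ (take $X = \1$), and it is closed under tensor products because the braiding isomorphism $Y_1 \otimes X_2 \cong X_2 \otimes Y_1$ yields $(X_1 \otimes Y_1) \otimes (X_2 \otimes Y_2) \cong (X_1 \otimes X_2) \otimes (Y_1 \otimes Y_2)$, with $X_1 \otimes X_2 \in \D_1$ and $Y_1 \otimes Y_2 \in \D_2$; being also closed under duals, it is a fusion subcategory and hence equals $\D_1 \vee \D_2$.

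The inclusion $(\D_1)_{ad} \vee (\D_2)_{ad} \subseteq (\D_1 \vee \D_2)_{ad}$ is the easy one: since $\D_i \subseteq \D_1 \vee \D_2$ and the simple objects of $\D_i$ remain simple in $\D_1 \vee \D_2$, the generators $X \otimes X^*$ of $(\D_i)_{ad}$ are among the generators of $(\D_1 \vee \D_2)_{ad}$, giving $(\D_i)_{ad} \subseteq (\D_1 \vee \D_2)_{ad}$ for $i = 1, 2$. For the reverse inclusion, which is the crux, let $Z$ be a simple object of $\D_1 \vee \D_2$ and write it as a summand of $X \otimes Y$ as above; then $Z^*$ is a summand of $(X \otimes Y)^* \cong X^* \otimes Y^*$, so $Z \otimes Z^*$ is a summand of $(X \otimes Y) \otimes (X^* \otimes Y^*)$. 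Using the braiding $Y \otimes X^* \cong X^* \otimes Y$ to rearrange, this last object is isomorphic to $(X \otimes X^*) \otimes (Y \otimes Y^*)$, which lies in $(\D_1)_{ad} \vee (\D_2)_{ad}$. As fusion subcategories are closed under direct summands, $Z \otimes Z^* \in (\D_1)_{ad} \vee (\D_2)_{ad}$, and since such objects generate $(\D_1 \vee \D_2)_{ad}$ the desired inclusion follows.

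The main obstacle is concentrated entirely in the reverse inclusion, and specifically in the repeated use of the braiding to commute the $\D_1$- and $\D_2$-factors past one another: without a braiding, the simple objects of $\D_1 \vee \D_2$ need not be summands of single products $X \otimes Y$, and the clean conclusion that $Z \otimes Z^*$ is a summand of $(X \otimes X^*) \otimes (Y \otimes Y^*)$ would break down. Everything else is bookkeeping with the definitions of the adjoint subcategory and the descending central series.
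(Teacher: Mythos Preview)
Your proof is correct and follows essentially the same approach as the paper's: both reduce by induction to the identity $(\D_1\vee\D_2)_{ad}=(\D_1)_{ad}\vee(\D_2)_{ad}$ and establish it by using the braiding to commute $\D_1$- and $\D_2$-factors past one another. The paper phrases the key step as the slightly more general observation that if a fusion category is generated by objects $X_1,\dots,X_r$ then its adjoint subcategory is generated by $X_1\otimes X_1^*,\dots,X_r\otimes X_r^*$; applying this with the generators being the simple objects of $\C_1$ together with those of $\C_2$ gives the identity directly. Your argument is simply an explicit unpacking of that observation in the two-subcategory case, making the role of the braiding visible at each step.
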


\begin{proof} Observe that if a fusion category $\D$ is generated by objects
$X_1, \dots, X_r$, $r \geq 1$, then $\D_{ad}$ is generated by $X_1\otimes X_1^*,
\dots, X_r\otimes X_r^*$. This implies that
	the adjoint subcategory of $\C_1\vee \C_2$ coincides with the fusion
subcategory $(\C_1)_{ad}\vee (\C_2)_{ad}$ generated by $(\C_1)_{ad}$ and
$(\C_2)_{ad}$. The lemma follows by induction on $n \geq 0$.
\end{proof}

Suppose that $\C$ is a fusion category such that the Grothendieck ring ${\rm
K}_0(\C)$ is commutative (e.g. if  $\C$ is braided). Let
$\D$ be a fusion subcategory of $\C$. Recall that the commutator $\D^{co}$ is a
fusion subcategory of $\C$, where $\D^{co}$ is the fusion subcategory of $\C$
generated by all objects $X$ such that $X \otimes X^*$ is an object of $\D$. It
follows from \cite[Lemma 4.15]{gelaki2008nilpotent} that 
$(\D^{co})_{ad}\subseteq \D\subseteq(\D_{ad})^{co}$.

\begin{proposition}\label{c-nil} Let $\C$ be a braided fusion category. Then the
following hold:
	
\smallbreak
(i) $\C$ contains a unique maximal nilpotent fusion subcategory $\C_{nil}$.
	
\smallbreak
(ii) Suppose $\C$ is non-degenerate. Let $\D \subseteq \C$ be the M\" uger
centralizer of the fusion subcategory $\C_{nil}$. Then $\D_{ad} = \D$, in other
words, $\U(\D) = 1$.	
\end{proposition}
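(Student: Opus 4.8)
The plan is to prove (i) by showing that the join of finitely many nilpotent fusion subcategories is again nilpotent, and (ii) by combining the double-centralizer property of non-degenerate braided categories with the duality between invertible objects and the universal grading. For (i), I would first record that a fusion category has only finitely many fusion subcategories, since each is determined by the subset of $\Irr(\C)$ it contains. By Lemma \ref{gen-nil} we have $(\C_1\vee\C_2)^{(n)}=\C_1^{(n)}\vee\C_2^{(n)}$, and an easy induction extends this to any finite join $\C_1\vee\cdots\vee\C_k$. Hence, if each $\C_i$ is nilpotent, choosing $N$ with $\C_i^{(N)}\cong\vect$ for all $i$ gives $(\C_1\vee\cdots\vee\C_k)^{(N)}\cong\vect$, so the join is nilpotent. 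Applying this to the (finite) family of \emph{all} nilpotent fusion subcategories of $\C$ produces a nilpotent fusion subcategory $\C_{nil}$ that contains every nilpotent one; it is therefore the unique maximal nilpotent fusion subcategory.

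For (ii), the key preliminary observation is that the maximal pointed subcategory $\C_{pt}$ is nilpotent: all its simple objects are invertible, so $(\C_{pt})_{ad}$ is generated by objects $X\otimes X^*\cong\mathbf 1$ and hence $(\C_{pt})_{ad}\cong\vect$. By part (i) this gives $\C_{pt}\subseteq\C_{nil}$. Since $\C$ is non-degenerate, the double-centralizer property yields $\D'=(\C_{nil})''=\C_{nil}$, so $\C_{pt}\subseteq\D'$; that is, every invertible object of $\C$ centralizes $\D$. I would then invoke the duality between the group of invertible objects and the universal grading group of a fusion subcategory of a non-degenerate braided fusion category (as developed by Gelaki--Nikshych and by Drinfeld--Gelaki--Nikshych--Ostrik): there is a canonical isomorphism $\widehat{\U(\D)}\cong\C_{pt}/(\C_{pt}\cap\D')$, equivalently $(\D_{ad})'=\C_{pt}\vee\D'$. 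Because $\C_{pt}\subseteq\D'$, the quotient $\C_{pt}/(\C_{pt}\cap\D')$ is trivial, which forces $\U(\D)=1$, i.e. $\D_{ad}=\D$.

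The main obstacle is precisely the grading--invertibles duality used in (ii). One inclusion is elementary: $\D_{ad}\subseteq\D$ gives $\D'\subseteq(\D_{ad})'$, and $\D_{ad}\subseteq\C_{ad}$ together with the standard identity $(\C_{ad})'=\C_{pt}$ for non-degenerate $\C$ gives $\C_{pt}\subseteq(\D_{ad})'$, whence $\C_{pt}\vee\D'\subseteq(\D_{ad})'$. The substantive content is the reverse inclusion, equivalently the surjectivity of the homomorphism $\C_{pt}\to\widehat{\U(\D)}$ induced by the monodromy pairing $g\mapsto\bigl(\D_x\ni Y\mapsto c_{Y,g}\,c_{g,Y}\bigr)$; this is where non-degeneracy is genuinely used. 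Once this pairing is available, the conclusion $\U(\D)=1$ is immediate from the containment $\C_{pt}\subseteq\C_{nil}=\D'$ established above.
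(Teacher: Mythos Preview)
Your argument for (i) is correct and essentially the paper's: Lemma~\ref{gen-nil} plus induction shows the join of nilpotent fusion subcategories is nilpotent, and then $\C_{nil}$ is the join of them all.

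For (ii), however, the duality you invoke is not valid. The injection
\[
G(\C)/\bigl(G(\C)\cap\D'\bigr)\;\hookrightarrow\;\widehat{\U(\D)}
\]
via monodromy is correct, but the surjectivity you assert---equivalently, the identity $(\D_{ad})'=\C_{pt}\vee\D'$---fails for general fusion subcategories $\D$ of a non-degenerate $\C$. A concrete counterexample: take $\C=\Rep D(S_3)$ and $\D=\C_{pt}$ (dimension~$2$, generated by the sign representation). Then $\D_{ad}=\vect$, so $\U(\D)\cong\Zz_2$ and $\widehat{\U(\D)}\cong\Zz_2$. On the other hand $\D'=(\C_{pt})'=\C_{ad}$, and in this example $\C_{pt}\subseteq\C_{ad}$ (since $\epsilon$ appears in $V\otimes V^*$), so $\C_{pt}/(\C_{pt}\cap\D')$ is trivial. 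Thus the two sides of your ``canonical isomorphism'' have different orders. The Gelaki--Nikshych statement you have in mind is the special case $\D=\C$; it does not extend to arbitrary subcategories.

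This matters for your argument because the injection runs the wrong way: knowing that $\C_{pt}\subseteq\D'$ only tells you that the \emph{source} of the injection is trivial, which says nothing about $\U(\D)$. The paper proceeds differently and avoids the issue entirely. It uses the identity $(\D_{ad})'=(\D')^{co}$ from \cite[Proposition~3.25]{drinfeld2010braided}, together with $\D'=\C_{nil}$, to get $(\D_{ad})'=(\C_{nil})^{co}$. Since $\bigl((\C_{nil})^{co}\bigr)_{ad}\subseteq\C_{nil}$, the category $(\C_{nil})^{co}$ is nilpotent; by maximality $(\C_{nil})^{co}=\C_{nil}$, whence $(\D_{ad})'=\C_{nil}=\D'$ and taking centralizers gives $\D_{ad}=\D$. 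The key move you are missing is to exploit the maximality of $\C_{nil}$ a second time, via the commutator rather than via invertible objects.
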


\begin{proof} Suppose that $\C_1$ and $\C_2$ are fusion subcategories of $\C$
such that $\C_1$ and $\C_2$ are nilpotent. It follows from Lemma \ref{gen-nil}
that the fusion subcategory $\C_1 \vee \C_2$ generated by $\C_1$ and $\C_2$ is
nilpotent. Hence the fusion subcategory $\C_{nil}$ generated by all nilpotent
fusion subcategories of $\C$ is nilpotent. This proves (i).
	
\medbreak Now assume that $\C$ is non-degenerate. By \cite[Proposition
3.25]{drinfeld2010braided}	we have that $(\D_{ad})' = (\D')^{co}$. Since
$\C$ is non-degenerate, then $\D' = \C_{nil}'' = \C_{nil}$ \cite[Theorem
3.10]{drinfeld2010braided}, so that $(\D_{ad})' = (\C_{nil})^{co}$.

On the other hand, $((\C_{nil})^{co})_{ad} \subseteq \C_{nil}$, hence
$(\C_{nil})^{co}$ is nilpotent. Since $\C_{nil} \subseteq (\C_{nil})^{co}$, we
obtain that $\C_{nil} = (\C_{nil})^{co}$, by maximality of $\C_{nil}$. Then
$(\D_{ad})' = \C_{nil}$ and therefore $\D_{ad} = (\D_{ad})'' = (\C_{nil})' =
\D$.
This proves part (ii) and finishes the proof of the proposition.
\end{proof}

\subsection{Braided $G$-crossed fusion categories and Tannakian subcategories of
a braided fusion category}\label{2.4}

Let $G$ be a finite group.
Let us recall the correspondence
between equivalence classes of braided fusion categories containing $\Rep G$ as
a
Tannakian subcategory and equivalence classes of braided $G$-crossed  fusion
categories \cite{muger2004galois}, \cite[Section 4.4]{drinfeld2010braided}.

\medbreak
A \emph{braided $G$-crossed fusion
category} is a fusion category $\C$ endowed with a $G$-grading $\C
= \oplus_{g \in G}\C_g$ and an action of $G$ by tensor autoequivalences
$\rho:\underline G \to \Aut_{\otimes} \, \C$, such that $\rho^g(\C_h)
\subseteq
\C_{ghg^{-1}}$, for all $g, h \in G$, and a $G$-braiding $c: X \otimes Y \to
\rho^g(Y) \otimes X$, $g \in G$, $X \in \C_g$, $Y \in \C$, subject to
appropriate
compatibility conditions.

\medbreak Every $G$-crossed braided fusion category gives  rise, through the
equivariantization process, to a braided fusion category containing $\Rep G$ as
a Tannakian subcategory.
Conversely, suppose that $\E \cong \Rep G$ is a Tannakian subcategory of a
braided fusion category $\C$.
Then the de-equivariantization $\C_G$ of $\C$ with respect to $\E$ is a braided
$G$-crossed fusion category in a canonical way.

The neutral component $\C_G^0$ of $\C_G$ with respect to the associated
$G$-grading is a braided fusion category and the crossed action of $G$ on $\C_G$
induces an action of $G$ on $\C_G^0$ by braided auto-equivalences. Moreover,
there is an equivalence of braided fusion categories $(\C_G^0)^G \cong \E'$,
where $\E'$ is the centralizer in $\C$ of the Tannakian subcategory $\E$.

The braided fusion category $\C$ is non-degenerate if and only if the neutral
component $\C_G^0$ of $\C_G$ is non-degenerate and the $G$-grading of $\C_G$ is
faithful \cite[Proposition 4.6 (ii)]{drinfeld2010braided}.
In particular, if $\C$ is a non-degenerate braided fusion category
containing a Tannakian subcategory $\E \cong \Rep G$, then $|G|^2$ divides
$\FPdim \C$.

\medbreak
We shall use the following result:

\begin{theorem}{\cite[Theorem 4.1]{2016DongIntegral}.}\label{2016DongIntegral}
Let $q$ be a prime number and let $d$ be a square-free natural number. Let also
$\C$ be an integral modular category such that $\FPdim \C = dq^n$, $n \geq 0$.
Suppose that $\E \subseteq \C$ is a maximal Tannakian subcategory. Then $\E'$ is
group-theoretical. \qed
\end{theorem}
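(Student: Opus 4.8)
The plan is to extract the entire symmetric center of $\E'$ by a de-equivariantization, reduce to a non-degenerate category acted on by a $q$-group, and then induct on the exponent of $q$. First I would write $\E \cong \Rep G$. Since $\C$ is non-degenerate and $\E$ is Tannakian, the discussion in Subsection \ref{2.4} gives $|G|^2 \mid \FPdim \C = dq^n$. Because $d$ is square-free, for every prime $p \neq q$ one has $v_p(dq^n) \leq 1$, so $v_p(|G|) = 0$; hence $G$ is a $q$-group. Using non-degeneracy of $\C$ and the double centralizer theorem \cite[Theorem 3.10]{drinfeld2010braided}, the M\"uger center of the braided fusion category $\E'$ equals $\E' \cap (\E')' = \E' \cap \E'' = \E' \cap \E = \E$, which is Tannakian. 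Thus de-equivariantizing $\E'$ by its symmetric center $\E$ produces, via the correspondence recalled in Subsection \ref{2.4}, a non-degenerate braided fusion category $\B := \C_G^0$ together with an action of $G$ for which $\B^G \cong \E'$.

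Since $\B$ is integral and non-degenerate it is modular \cite[Proposition 8.23]{etingof2005fusion}, of Frobenius-Perron dimension $dq^m$ with $m < n$ whenever $G$ is nontrivial; being an ASF modular category it is solvable \cite[Corollary 7.3]{witt-wgt}. I would then reduce the theorem to the assertion that $\B$ is group-theoretical, invoking the fact that a $G$-equivariantization of a group-theoretical fusion category is again group-theoretical. To prove $\B$ group-theoretical I would iterate the previous paragraph: if $\B$ contains a nontrivial Tannakian subcategory, I choose a maximal one and repeat. Each such step is governed by a $q$-group and strictly lowers the exponent of $q$, so after finitely many steps one reaches a non-degenerate solvable ASF modular category $\B_0$ that contains no nontrivial Tannakian subcategory.

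The main obstacle is the terminal case: showing that a non-degenerate solvable ASF braided fusion category with no nontrivial Tannakian subcategory is pointed, and therefore group-theoretical. Here I would appeal to the structure of solvable categories, namely that a nontrivial solvable non-degenerate braided fusion category which is not pointed must contain a nontrivial Tannakian subcategory; the absence of such a subcategory then forces $\B_0$ to be pointed. Pointed fusion categories are group-theoretical, and unwinding the tower of equivariantizations built above (each preserving the group-theoretical property) yields that $\B$, and hence $\E' \cong \B^G$, is group-theoretical. The two delicate points to verify are that the maximal Tannakian subcategory selected at each stage is genuinely removed by the de-equivariantization, so that $G$-stability issues do not obstruct the induction, and the precise form of the dichotomy \emph{not pointed} $\Rightarrow$ \emph{contains a nontrivial Tannakian subcategory} in the solvable non-degenerate setting.
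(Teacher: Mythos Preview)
This theorem is an external result quoted from \cite{2016DongIntegral}; the paper does not reprove it. The paragraph following the statement records what the original argument actually shows: the core $\C_G^0$ is a \emph{pointed} non-degenerate braided fusion category, and since $\E' \cong (\C_G^0)^G$, the characterization in \cite[Theorem 7.2]{naidu2009fusion} then gives that $\E'$ is group-theoretical. Your opening paragraph correctly reconstructs this setup ($G$ is a $q$-group, $\E$ is the M\"uger center of $\E'$, and $\B := \C_G^0$ is non-degenerate with $\B^G \cong \E'$), so everything reduces to showing that $\B$ is pointed.

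Your plan to reach this by iterating the passage to the core has a structural gap in the unwinding. If $\E_1 \cong \Rep G_1$ is a nontrivial Tannakian subcategory of the non-degenerate category $\B$ and $\B_1 := \B_{G_1}^0$, then $\B_1^{G_1}$ recovers only the centralizer $\E_1'$ inside $\B$, which is a \emph{proper} subcategory (since $\B$ is non-degenerate and $\E_1 \neq \vect$); so ``$\B_1$ group-theoretical'' yields only ``$\E_1'$ group-theoretical,'' not ``$\B$ group-theoretical.'' Equivariantizing the full $G_1$-crossed category $\B_{G_1}$ does recover $\B$, but then you need that a $G_1$-extension of a pointed (or group-theoretical) category is group-theoretical, which is false in general. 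Separately, your terminal dichotomy ``solvable, non-degenerate, not pointed $\Rightarrow$ contains a nontrivial Tannakian subcategory'' fails as stated: any Ising category is a counterexample. Integrality rules out Ising, but even then this is not a standard black box, and establishing it for ASF categories is precisely the substance of showing the core is pointed, which \cite{2016DongIntegral} does directly from the specific dimension constraints.
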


Note that if $\E \cong \Rep G$ is a maximal Tannakian subcategory, then $\E'
\cong (\C_G^0)^G$, where $\C_G^0$  is the core of $\C$ introduced in
\cite[Section 5]{drinfeld2010braided}. In fact, it is shown in
\cite{2016DongIntegral} that the core of $\C$ is a pointed non-degenerate
braided fusion category.

\section{The order of the dimensional  grading group of a weakly integral 
modular category}\label{sec3}
Let $\C$ be a weakly integral fusion category. It was shown in
\cite{gelaki2008nilpotent} that $\C=\oplus_{g\in E}\C_g$ is faithfully graded by
an elementary abelian $2$-group $E$. Moreover, there is a set of distinct
square-free integers $n_g$, $g\in E$,  such that $n_e=1$ and $\FPdim X\in
\mathbb{Z}\sqrt{n_g}$, for every $X\in \Irr(\C_g)$. This canonical grading is
called the \emph{dimensional grading} of $\C$. The neutral component $\C_e$ of
this grading is denoted by $\C_{int}$. The group $E$ will be called the
\emph{dimensional grading group} of $\C$.

The Frobenius-Perron dimension of every weakly integral fusion category has the
form $d2^n$, where $n$ is a non-negative integer and $d$ is an odd natural
number. The following theorem gives an upper bound for the order of $E$ in the
case where $\C$ is a modular category.

\begin{theorem}\label{uppbound}
Let $\C$ be a weakly integral modular category and let $\C=\oplus_{g\in E}\C_g$
be its dimensional grading. Suppose that  $\FPdim \C = d 2^n$, where $n \geq 0$
and $d$ is an odd natural number. Then $|E|\leq 2^{\frac{n}{2}}$. In other
words, $|E|^2$ divides the Frobenius-Perron dimension of $\C$.
\end{theorem}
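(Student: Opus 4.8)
The plan is to locate inside $\C$ a symmetric fusion subcategory whose Frobenius-Perron dimension is exactly $|E|$ and which is contained in its own M\"uger centralizer, and then to read off the divisibility $|E|^2 \mid \FPdim\C$ from the numerical behaviour of centralizers in a non-degenerate braided category. I would begin with two reductions. Since the dimensional grading $\C=\oplus_{g\in E}\C_g$ is faithful, all homogeneous components share the same Frobenius-Perron dimension, whence $\FPdim\C=|E|\,\FPdim\C_{int}$; it therefore suffices to prove that $|E|$ divides the integer $\FPdim\C_{int}$. Moreover, every object of the form $X\otimes X^*$ has integer dimension $(\FPdim X)^2$, so $\C_{ad}\subseteq\C_{int}$, and since invertible objects have dimension $1$ the maximal pointed subcategory satisfies $\C_{pt}\subseteq\C_{int}$.

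The core of the argument is the study of the M\"uger centralizer $\mathcal{P}:=(\C_{int})'$. First I would show $\mathcal{P}\subseteq\C_{int}$. Taking centralizers in $\C_{ad}\subseteq\C_{int}$ reverses the inclusion, and using that $(\C_{ad})'=\C_{pt}$ in a non-degenerate braided category \cite{drinfeld2010braided} gives $\mathcal{P}=(\C_{int})'\subseteq(\C_{ad})'=\C_{pt}\subseteq\C_{int}$; in particular $\mathcal{P}$ is pointed. By the double centralizer theorem for non-degenerate categories, $\mathcal{P}'=(\C_{int})''=\C_{int}$, so the inclusion just obtained reads $\mathcal{P}\subseteq\mathcal{P}'$, i.e.\ $\mathcal{P}$ is symmetric. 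Finally, M\"uger's identity $\FPdim\C_{int}\cdot\FPdim\mathcal{P}=\FPdim\C$ combined with $\FPdim\C=|E|\,\FPdim\C_{int}$ forces $\FPdim\mathcal{P}=|E|$.

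To conclude, I would invoke the standard fact that the Frobenius-Perron dimension of a fusion subcategory divides that of the ambient category \cite{etingof2005fusion}. Applied to $\mathcal{P}\subseteq\C_{int}$, this yields $|E|=\FPdim\mathcal{P}\mid\FPdim\C_{int}$, which is precisely the reduction we needed; hence $\FPdim\C=|E|\,\FPdim\C_{int}$ is divisible by $|E|^2$. Writing $\FPdim\C=d2^n$ with $d$ odd and recalling that $|E|$ is a power of $2$, divisibility of $|E|^2$ into $d2^n$ means $|E|^2\mid 2^n$, i.e.\ $|E|\le 2^{n/2}$. The step I expect to be the only delicate one is the identification $\mathcal{P}\subseteq\C_{int}$ (equivalently, that $\mathcal{P}$ is pointed): this is where non-degeneracy and the centralizer description $(\C_{ad})'=\C_{pt}$ are essential, everything else being bookkeeping with dimensions. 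A convenient feature of this route is that it treats $\mathcal{P}$ uniformly and never needs a separate analysis of the Tannakian versus super-Tannakian (e.g.\ $\svect$) cases, since the divisibility of nested Frobenius-Perron dimensions absorbs both.
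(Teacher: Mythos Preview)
Your proof is correct and follows a somewhat different route from the paper's. The paper argues via the universal grading group: invoking $\FPdim\C_{pt}=|\U(\C)|$ from \cite[Theorem 6.2]{gelaki2008nilpotent} and the surjection $\U(\C)\twoheadrightarrow E$, it deduces $|E|\mid|\U(\C)|=\FPdim\C_{pt}\mid\FPdim\C_{int}$ and then finishes by writing out the resulting chain of equalities. You instead work entirely with centralizers, exhibiting the subcategory $\mathcal P=(\C_{int})'$ of Frobenius--Perron dimension exactly $|E|$ and using the chain $(\C_{int})'\subseteq(\C_{ad})'=\C_{pt}\subseteq\C_{int}$ to place it inside $\C_{int}$. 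The two approaches rest on the same modular-category ingredient---in your version the identity $(\C_{ad})'=\C_{pt}$, in the paper's the equivalent statement $\FPdim\C_{pt}=|\U(\C)|$, these being interchangeable via M\"uger's dimension formula---but yours is slightly more direct: it pins down a concrete subcategory of size $|E|$ rather than passing through $|\U(\C)|$ as an intermediary, and it yields as a byproduct that $(\C_{int})'$ is a symmetric pointed subcategory of dimension $|E|$ (a fact you note but do not actually need). The paper's route, by contrast, makes explicit the role of the universal grading, which ties in naturally with the framework of \cite{gelaki2008nilpotent} used elsewhere.
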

\begin{proof}
The neutral component $\C_{int}$ is the unique component of this grading whose
objects are integral, that is, $\FPdim X\in\mathbb{Z}$ for every $X\in
\C_{int}$. Hence, the largest pointed fusion subcategory $\C_{pt}$ is a fusion
subcategory of $\C_{int}$ and therefore $\FPdim \C_{pt}$ divides $\FPdim
\C_{int}$, by \cite[Proposition 8.15]{etingof2005fusion}. On the other hand,
$\FPdim \C_{pt}=|\U(\C)|$ by \cite[Theorem 6.2]{gelaki2008nilpotent} and hence
$|\U(\C)|$ divides $\FPdim \C_{int}$. In addition, \cite[Corollary
3.7]{gelaki2008nilpotent} shows that there is a surjective group homomorphism
$\pi:\U(\C)\to E$. Hence $|E|$ divides $|\U(\C)|$.

Since $E$ is an elementary abelian $2$-group, we may assume that $|E|=2^t$ for
some non-negative integer $t$. From the arguments above, we have
$$\FPdim \C_{int}=|\U(\C)|m_2,\, 2^tm_1=|\U(\C)| \mbox{\, and \,}2^t\FPdim
\C_{int}=\FPdim \C,$$
where $m_1,m_2$ are positive integers. These equalities imply that
$2^{n-2t}d=m_1m_2$. Hence we get that $2^t=|E|\leq 2^{\frac{n}{2}}$.
\end{proof}

\begin{corollary}\label{2squarefree}
Let $\C$ be a weakly integral braided fusion category such that $\FPdim \C$ is
not divisible by $4$.
Then $\C$ is integral.
\end{corollary}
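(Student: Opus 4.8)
The plan is to reduce to the modular case and then invoke Theorem \ref{uppbound}. Write $\FPdim \C = 2^n d$ with $d$ odd; since $4 \nmid \FPdim \C$ we have $n \le 1$. Assume, for a contradiction, that $\C$ is not integral. Then $\C$ is strictly weakly integral, so its dimensional grading group $E$ is nontrivial and $|E| = 2^t$ with $t \ge 1$. Since $\FPdim \C = |E|\,\FPdim \C_{int}$, the order $|E| = 2^t$ divides $2^n d$; as $d$ is odd this forces $t \le n \le 1$, whence $t = n = 1$. Thus $|E| = 2$, $\FPdim \C = 2d$, and $\FPdim \C_{int} = d$ is odd. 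Observe also that every integral simple object of $\C$ lies in $\C_{int}$, because the objects of the non-neutral component have Frobenius--Perron dimension in $\Zz\sqrt{n}$ for a square-free integer $n \neq 1$, hence irrational.

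The first key step is to control the M\"uger center $\C'$. Since $\C'$ is symmetric it is integral, so by the observation above $\C' \subseteq \C_{int}$, and $\FPdim \C'$ divides $\FPdim \C_{int} = d$ by \cite[Proposition 8.15]{etingof2005fusion}; in particular $\FPdim \C'$ is odd. I next claim that $\C'$ is Tannakian. Indeed, the maximal pointed subcategory $\C_{pt}$ is integral as well, hence contained in $\C_{int}$, so the group of invertible objects of $\C$ has odd order. Were $\C'$ super-Tannakian of the form $\Rep(G,u)$ with $u \neq 1$, it would contain the fermion, an invertible object of order $2$, contradicting the oddness just established. Hence $u = 1$ and $\C' \cong \Rep K$ is Tannakian, with $|K| = \FPdim \C'$ an odd divisor of $d$.

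The final step is modularization. De-equivariantizing $\C$ by the Tannakian subcategory $\C' \cong \Rep K$ yields, as in \cite[Section 4]{drinfeld2010braided}, a non-degenerate braided fusion category $\widetilde{\C}$ with $\FPdim \widetilde{\C} = \FPdim \C / |K| = 2d/|K|$; since $|K|$ is an odd divisor of $d$, this dimension is $\equiv 2 \pmod 4$. As $\widetilde{\C}$ is weakly integral and non-degenerate, it is modular by \cite[Proposition 8.23, Proposition 8.24]{etingof2005fusion}, so Theorem \ref{uppbound} applies and gives that the square of the order of its dimensional grading group divides $\FPdim \widetilde{\C}$. Because $\FPdim \widetilde{\C} \equiv 2 \pmod 4$, this grading group must be trivial, i.e. $\widetilde{\C}$ is integral. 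But $\C$ is recovered as the $K$-equivariantization of $\widetilde{\C}$, and equivariantizations of integral fusion categories are integral; hence $\C$ is integral, a contradiction.

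I expect the main obstacle to be precisely the passage through the degenerate case: the non-braided analogue of the statement is false (Tambara--Yamagami categories of Frobenius--Perron dimension $2d$ with $d$ odd provide fusion-categorical counterexamples), so the argument must genuinely use the braiding. The crux is therefore the second step, namely showing that the oddness of $\FPdim \C_{int}$ forces the M\"uger center to be Tannakian rather than merely super-Tannakian; once this is established, modularization reduces the whole problem to the modular bound already proved in Theorem \ref{uppbound}.
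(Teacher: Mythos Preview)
Your proof is correct and follows essentially the same route as the paper's: reduce to the case $\FPdim\C = 2d$ with $d$ odd, observe that the M\"uger center is contained in $\C_{int}$ and hence has odd Frobenius--Perron dimension, conclude it is Tannakian, de-equivariantize to obtain a non-degenerate (hence modular) category of dimension $\equiv 2 \pmod 4$, apply Theorem~\ref{uppbound}, and lift integrality back through the equivariantization. The only cosmetic differences are that the paper treats the cases $n=0$ and $n=1$ non-degenerate separately before the contradiction argument, and deduces Tannakianness directly from oddness of $\FPdim\C'$ rather than via the absence of an order-$2$ invertible; your fermion argument is a valid alternative.
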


In other words, if $\C$ is a strictly weakly integral braided fusion category
then $4$ divides $\FPdim \C$.

\begin{proof}
We may assume that $\FPdim \C=d2^n$, where $d$ is odd and $n=0$ or $1$. If $n=0$
then $\FPdim \C$ is odd, and hence $\C$ is automatically integral by
\cite[Corollary 3.11]{gelaki2008nilpotent}. If $n=1$ and $\C$ is non-degenerate
then the order of $E$ is $1$ by Theorem \ref{uppbound}. This means that
$\C=\C_{int}$ is integral. In the rest of our proof, we shall consider the case
when $n=1$ and $\C$ is degenerate.

Suppose on the contrary that $\C$ is strictly weakly integral. In this case
\cite[Theorem 3.10]{gelaki2008nilpotent} shows that $\C$ has a faithful
$G$-grading, where $G$ is an elementary abelian $2$-group. Since $2\mid\FPdim
\C$ and $4\nmid\FPdim \C$, we have $G \cong \mathbb{Z}_2$ and hence
$\C=\C_0\oplus\C_1$. In addition, \cite[Theorem 3.10]{gelaki2008nilpotent} also
shows that $X\in\Irr(\C)$ is integral if and only if $X\in \C_0$. Notice that
$\FPdim \C_{0}=\FPdim \C/2$ is an odd integer.

Let $\E$ be the M\"{u}ger center of $\C$. It is not trivial by assumption. Since
$\E$ is symmetric, then it is integral and therefore $\E \subseteq \C_0$. Thus
$\FPdim \E$ must be odd, whence $\E\cong\Rep(H)$ is a Tannakian subcategory of
$\C$ for some finite group $H$. We may thus consider the de-equivariantization
$\C_H$ of $\C$ by $\Rep(H)$, which is a non-degenerate braided fusion category.

Observe that $\C_H$ is weakly integral and $\FPdim \C = |H| \, \FPdim \C_H$, so
that $\FPdim \C_H$ is not divisible by $4$. It follows from  the first part of
the proof that $\C_H$ is integral. Then $\C$ is integral because the class of
integral fusion categories is closed under taking equivariantization
\cite{burciu2013fusion}.
This completes the proof of the corollary.
\end{proof}

\begin{remark}
If 4 divides $\FPdim \C$,
then $\C$ may be not integral. An example of this class of modular categories is
classified in \cite{Bruillard20162364}.

On the other hand, Corollary \ref{2squarefree} combined with \cite[Lemma
1.2]{etingof1998some} implies that a weakly integral modular category such that
$\FPdim \C$ is square-free is necessarily pointed; see \cite[Lemma
3.4]{Bruillard20162364}.
\end{remark}

\section{Nilpotency of a class of ASF modular categories}\label{sec4}

Let $q$ be a prime number and let $d$ be a square-free natural number not
divisible by $q$. Throughout this section $\C$ will denote an ASF modular
category of Frobenius-Perron dimension $dq^n$, unless otherwise stated.

Recall from Proposition \ref{c-nil} that $\C$ contains a unique maximal
nilpotent fusion subcategory $\C_{nil}$. In what follows we shall denote by $\D
= \C_{nil}'$ the centralizer of $\C_{nil}$. By Proposition \ref{c-nil}, $\D_{ad}
= \D$.

Since $\C$ is modular, then \cite[Theorem 3.14]{drinfeld2010braided} implies
that
\begin{equation}
\FPdim \C_{nil} = aq^m, \quad \FPdim \D = bq^{n-m},
\end{equation} where $a$ and $b$ are natural numbers such that $ab = d$, and
$1\leq m \leq n$.

\begin{remark}\label{rmk-qpwr}
Observe that if $\E$ is a Tannakian subcategory of $\C$ then $(\FPdim \E)^2$
divides $\FPdim \C$. Hence $\FPdim \E$ is a power of $q$. In particular, every
Tannakian subcategory of $\C$ is nilpotent and therefore it is contained in
$\C_{nil}$.
\end{remark}

\emph{In the next lemmas we assume that $\C$ is integral.}

\begin{lemma}
The categories $\C_{nil}$ and $\D$ are group-theoretical.
\end{lemma}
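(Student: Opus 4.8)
The plan is to treat the two categories separately: $\C_{nil}$ can be handled directly, while the group-theoreticity of $\D$ will be deduced from that of a centralizer to which Theorem \ref{2016DongIntegral} applies.

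For $\C_{nil}$ I would simply observe that it is a braided fusion category, being a fusion subcategory of the braided category $\C$; that it is nilpotent by its very construction in Proposition \ref{c-nil}; and that it is integral, since $\C$ is assumed integral in this part of the section and fusion subcategories of integral categories are integral. By \cite{drinfeld2007group}, every integral nilpotent braided fusion category is group-theoretical, so $\C_{nil}$ is group-theoretical with no further work.

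For $\D = \C_{nil}'$ the idea is to exhibit $\D$ as a fusion subcategory of a suitable centralizer. First I would fix a maximal Tannakian subcategory $\E$ of $\C$, which exists since $\vect \cong \Rep 1$ is Tannakian and $\C$ has only finitely many fusion subcategories. By Remark \ref{rmk-qpwr} every Tannakian subcategory of $\C$ is contained in $\C_{nil}$, so in particular $\E \subseteq \C_{nil}$. Passing to M\"uger centralizers reverses inclusions, so $\D = \C_{nil}' \subseteq \E'$. By Theorem \ref{2016DongIntegral} the category $\E'$ is group-theoretical, and it is braided as a fusion subcategory of $\C$; thus $\E'$ is a braided group-theoretical fusion category.

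It then remains to transfer group-theoreticity from $\E'$ down to its fusion subcategory $\D$. For this I would invoke the description of braided group-theoretical categories recalled in the introduction: since $\E'$ is braided and group-theoretical, it embeds as a fusion subcategory of $\Rep(D^\omega G)$ for some finite group $G$ and $3$-cocycle $\omega$, and hence so does $\D \subseteq \E'$. By the classification of fusion subcategories of $\Rep(D^\omega G)$ in \cite{naidu2009fusion}, every such subcategory is group-theoretical, so $\D$ is group-theoretical. The main point requiring care is precisely this closure of group-theoreticity under passage to fusion subcategories; it is the braided hypothesis that makes it available, through the embedding into a twisted quantum double, and the only other delicate steps are checking that the centralizer directions are applied correctly and that the maximality of $\E$ is genuinely used via Theorem \ref{2016DongIntegral}.
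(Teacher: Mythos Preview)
Your proof is correct and follows exactly the paper's route: $\C_{nil}$ is handled via integrality plus nilpotency and \cite{drinfeld2007group}, and $\D$ is handled via the containment $\D = \C_{nil}' \subseteq \E'$ for a maximal Tannakian $\E$ (using Remark \ref{rmk-qpwr}) together with Theorem \ref{2016DongIntegral}. The paper asserts the final step --- that a fusion subcategory of the group-theoretical $\E'$ is again group-theoretical --- in one line, whereas you justify it through the embedding into $\Rep(D^\omega G)$ and \cite{naidu2009fusion}; this is harmless extra care, though closure of group-theoreticity under fusion subcategories is in fact a standard fact not requiring the braided hypothesis.
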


\begin{proof} Since $\C_{nil}$ is nilpotent and integral, then it is
group-theoretical \cite{drinfeld2007group}.
Let $\E \subseteq \C$ be a maximal Tannakian subcategory. It follows from
Theorem \ref{2016DongIntegral} that  $\E'$ is group-theoretical.

As observed in Remark \ref{rmk-qpwr}, $\E \subseteq \C_{nil}$. Then $\D =
(\C_{nil})' \subseteq \E'$ and therefore $\D$ is also group-theoretical, as
claimed.
\end{proof}

Since $\D$ is a group-theoretical braided fusion category, it follows from
\cite[Theorem 7.2]{naidu2009fusion} that $\D$ contains a Tannakian subcategory
$\E \cong \Rep G$ such that the de-equivariantization $\D_G$ is a pointed fusion
category.

\begin{lemma}\label{dim-e}
Let $\E \cong \Rep G$ be a Tannakian subcategory of $\D$ such that $\D_G$ is 
pointed. Then $\E$ is contained in the M\" uger center of $\D$. In particular, 
$\D_G$ is a braided fusion category and the canonical dominant functor $F: \D
\to \D_G$ is a braided tensor functor.
\end{lemma}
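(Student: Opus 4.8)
The plan is to prove the slightly stronger statement that \emph{every} Tannakian subcategory of $\D$ is contained in the Müger center of $\D$; the hypothesis that $\D_G$ is pointed will be needed only for the concluding assertion about $F$. The starting point is the observation that, since $\D$ is a fusion subcategory of $\C$, the Tannakian subcategory $\E \cong \Rep G$ of $\D$ is in particular a Tannakian subcategory of $\C$ (being Tannakian is intrinsic to $\E$ with its inherited braiding). By Remark \ref{rmk-qpwr}, every Tannakian subcategory of $\C$ is nilpotent and hence contained in the maximal nilpotent subcategory $\C_{nil}$; thus $\E \subseteq \C_{nil}$. On the other hand, $\E \subseteq \D = \C_{nil}'$ by hypothesis.

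The next step is to identify the Müger center of $\D$ explicitly. Since $\C$ is modular, hence non-degenerate, Müger's double centralizer theorem \cite[Theorem 3.10]{drinfeld2010braided} gives $\C_{nil}'' = \C_{nil}$, so the centralizer of $\D = \C_{nil}'$ in $\C$ equals $\C_{nil}$. Consequently the Müger center of $\D$, being the intersection of $\D$ with its centralizer in $\C$, equals $\C_{nil}' \cap \C_{nil}$. Combining this with the previous paragraph yields $\E \subseteq \C_{nil} \cap \C_{nil}'$, which is precisely the Müger center of $\D$. This establishes the main assertion, and notably does not use the pointedness of $\D_G$.

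For the final statement, the point is that $\E \subseteq \D'$ means every object of $\D$ centralizes $\E$, so the centralizer of $\E$ \emph{within} $\D$ is all of $\D$. Applying the correspondence recalled in Subsection \ref{2.4} inside the braided category $\D$, the neutral component $\D_G^0$ of the de-equivariantization satisfies $(\D_G^0)^G \cong \D$, so $\FPdim \D_G^0 = \FPdim \D / |G|$. Since also $(\D_G)^G \cong \D$, one has $\FPdim \D_G = \FPdim \D / |G| = \FPdim \D_G^0$, and as $\D_G^0 \subseteq \D_G$ this forces $\D_G^0 = \D_G$. Thus the $G$-grading on $\D_G$ is trivial, the $G$-crossed braiding is an ordinary braiding, $\D_G$ is a braided fusion category, and the canonical dominant functor $F \colon \D \to \D_G$ is a braided tensor functor. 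The only delicate point is this dimension count identifying $\D_G$ with its neutral component; everything preceding it is a direct application of the double centralizer theorem and Remark \ref{rmk-qpwr}.
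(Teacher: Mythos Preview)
Your proof is correct and follows essentially the same route as the paper's: both use Remark~\ref{rmk-qpwr} to place $\E$ inside $\C_{nil}$, then identify the M\"uger center of $\D$ with $\C_{nil} \cap \C_{nil}'$ via the double centralizer theorem (which the paper had already recorded in the proof of Proposition~\ref{c-nil}). One small expository point: you announce that the pointedness of $\D_G$ will be needed for the concluding assertion, but your dimension argument for $\D_G^0 = \D_G$ never actually uses it---the hypothesis is in fact superfluous for the lemma itself and only matters for the subsequent development.
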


\begin{proof}
Since $\E$ is a  Tannakian subcategory of $\C$, then we know that  $\FPdim \E =
q^j$, for some $1\leq j \leq n-m$, and $\E \subseteq \C_{nil} = \D'$ (see Remark
\ref{rmk-qpwr} (i)). In particular, $\E$ is contained in the M\"uger center of
$\D$, which coincides with $\D \cap \D'$. This implies that $\D_G$ is a braided
fusion category and the canonical functor $F: \D \to \D_G$ is a braided tensor
functor, as claimed.
\end{proof}

Suppose that $\FPdim \E = q^j$, $1\leq j \leq n-m$. Then $\FPdim \D_G =
bq^{n-m-j}$. By Lemma \ref{dim-e}, there is an action of the group $G$
on $\D_G$ by braided autoequivalences such that $\D \cong (\D_G)^G$.

Since $d$ is  by assumption square-free and not divisible by $q$, then we may
write $b = p_1\dots p_r$, where $p_1, \dots, p_r$ are pairwise distinct prime
numbers and $p_i \neq q$, for all $1\leq i \leq r$.

The fact that $\D_G$ is pointed implies that $\D_G$ contains unique fusion
subcategories $(\D_G)_{p_1}, \dots, (\D_G)_{p_r}$ and $(\D_G)_{q}$  such that
$\FPdim (\D_G)_{p_i} = p_i$, $1\leq i \leq r$, and $\FPdim (\D_G)_q =
q^{n-m-j}$. Moreover, there is an equivalence of braided fusion categories
\begin{equation}\label{dec-dg}\D_G \cong (\D_G)_{p_1} \boxtimes \dots \boxtimes
(\D_G)_{p_r} \boxtimes (\D_G)_{q}.\end{equation}
The fusion subcategories $(\D_G)_{p_i}$ and $(\D_G)_{q}$ are clearly stable
under the action of the group $G$, so that the equivariantizations $\D_{p_i}: =
(\D_G)_{p_i}^G$ and $\D_q: = (\D_G)_{q}^G$ are fusion subcategories of $\D$ such
that
$\FPdim \D_{p_i}^G = p_iq^j$ and $\FPdim \D_q^G = q^{n-m}$.

A dimension argument shows that $\D = \D_{p_1} \vee \dots \vee \D_{p_r} \vee
\D_q$. Since $\D_q$ is nilpotent, there exists $n \geq 1$ such that $\D_q^{(n)}
\cong \vect$. In view of Lemma \ref{gen-nil}, this implies  that $\D = \D^{(n)}$
is generated by $\D_{p_i}^{(n)}$, $1\leq i \leq r$. Then necessarily
\begin{equation}\D = \D_{p_1} \vee \dots \vee \D_{p_r}. \end{equation}

\begin{lemma}\label{dim} $\FPdim \D_{p_{i_1}} \vee \dots \vee \D_{p_{i_s}} =
q^jp_{i_1}\dots p_{i_s}$, for all pairwise distinct $1 \leq i_1, \dots, i_s \leq
r$, $s \geq 1$.	
\end{lemma}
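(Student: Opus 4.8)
The plan is to carry out the dimension count after de-equivariantizing, where the ambient category becomes pointed and the join in question becomes a sub-Deligne-product. Write $\mathcal K = \D_{p_{i_1}} \vee \dots \vee \D_{p_{i_s}}$ and let $F \colon \D \to \D_G$ be the dominant braided tensor functor of Lemma \ref{dim-e}, realizing the de-equivariantization of $\D$ by $\E \cong \Rep G$, with $|G| = \FPdim \E = q^j$. The key observation is that each $\D_{p_i} = (\D_G)_{p_i}^G$ contains $\E$ (as the equivariantization of the unit), hence so does $\mathcal K$; this places $\mathcal K$ and each $\D_{p_i}$ in the range of the Galois correspondence between fusion subcategories of $\D$ containing $\E$ and $G$-stable fusion subcategories of $\D_G$.

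First I would record that under this correspondence $F(\D_{p_i}) = (\D_G)_{p_i}$, since de-equivariantizing the equivariantization $(\D_G)_{p_i}^G$ returns $(\D_G)_{p_i}$. As $F$ is a tensor functor, the fusion subcategory generated by the images $F(\D_{p_{i_1}}), \dots, F(\D_{p_{i_s}})$ equals $F(\mathcal K)$, so that
\[
F(\mathcal K) = (\D_G)_{p_{i_1}} \vee \dots \vee (\D_G)_{p_{i_s}}.
\]
Next I would evaluate this join inside the pointed category $\D_G$. By the decomposition \eqref{dec-dg}, the subcategories $(\D_G)_{p_i}$ are the distinct primary factors of the Deligne decomposition of $\D_G$, so (using that the indices $i_1, \dots, i_s$ are pairwise distinct and $s \geq 1$) their join is the corresponding sub-Deligne-product $(\D_G)_{p_{i_1}} \boxtimes \dots \boxtimes (\D_G)_{p_{i_s}}$, of Frobenius-Perron dimension $p_{i_1}\dots p_{i_s}$. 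Finally, since $\mathcal K$ contains $\E$, passing to its de-equivariantization divides the dimension by $|G| = q^j$, whence $\FPdim \mathcal K = q^j\,\FPdim F(\mathcal K) = q^j\, p_{i_1}\dots p_{i_s}$, as required.

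The step I expect to require the most care is the compatibility of $F$ with the lattice of fusion subcategories: namely, that $F$ carries $\mathcal K$ to the join of the images $F(\D_{p_i})$ and that $\FPdim \mathcal K = q^j\,\FPdim F(\mathcal K)$. Both follow from the standard correspondence between fusion subcategories containing $\Rep G$ and $G$-stable fusion subcategories of $\D_G$, which is a lattice isomorphism multiplying Frobenius-Perron dimensions by $|G|$; I would make sure the hypothesis $\E \subseteq \mathcal K$ is in force at every stage so that these dimension formulas apply. Alternatively, one could argue by induction on $s$ via a two-subcategory join formula in a non-degenerate braided category, but the de-equivariantization route has the advantage of avoiding any direct computation of intersection dimensions.
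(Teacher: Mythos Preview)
Your argument is correct and takes a genuinely different route from the paper's. The paper proceeds by induction on $s$: it observes that $\E \subseteq \D_{p_{i_l}}$ for every $l$, so $q^j$ divides the dimension of the intersection $(\D_{p_{i_1}}\vee \dots \vee \D_{p_{i_{s-1}}}) \cap \D_{p_{i_s}}$; by the inductive hypothesis this intersection's dimension also divides $\gcd(q^j p_{i_1}\dots p_{i_{s-1}},\, q^j p_{i_s}) = q^j$, hence equals $q^j$; the join's dimension then follows from the product formula \cite[Corollary 3.12]{drinfeld2010braided}. You instead pass to $\D_G$ via the Galois correspondence: since each $\D_{p_{i_l}}$ contains $\E$, the correspondence applies and is a lattice isomorphism dividing dimensions by $|G| = q^j$; the join downstairs is computed in a pointed category where it is transparent. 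Your approach trades the inductive bookkeeping and the citation of the join--meet dimension formula for the (equally standard) lattice-isomorphism property of equivariantization/de-equivariantization; it is arguably cleaner and computes the answer in one shot, while the paper's approach stays entirely inside $\D$ and invokes only a single dimension identity. Your closing remark about the alternative inductive approach is in fact exactly what the paper does, though note that $\D$ need not be non-degenerate---the relevant formula from \cite{drinfeld2010braided} does not require it.
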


\begin{proof}  The proof is by induction on $s$. The claim is clear if $s = 1$.
Assume $s > 1$. Notice that $\E \subseteq \cap_{i = 1}^r\D_{p_i}$, so that $q^j$
divides $\FPdim (\D_{p_{i_1}}\vee \dots \vee \D_{p_{i_{s-1}}}) \cap
\D_{p_{i_{s}}}$. By the inductive assumption,  $\FPdim (\D_{p_{i_1}}\vee \dots
\vee \D_{p_{i_{s-1}}}) \cap \D_{p_{i_{s}}}$ divides $q^j$ and therefore $\FPdim
(\D_{p_{i_1}}\vee \dots \vee \D_{p_{i_{s-1}}}) \cap \D_{p_{i_{s}}} = q^j$. The
identity follows by induction from \cite[Corollary 3.12]{drinfeld2010braided}.
\end{proof}

\begin{lemma}\label{dim-e-2}
(i) $\FPdim \E = q^{n-m}$.

\medbreak
(ii) The Tannakian subcategory $\E$ coincides with the M\" uger center of $\D$.
In particular, $\D_G$ is non-degenerate.
\end{lemma}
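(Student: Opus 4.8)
The plan is to treat the two assertions in turn, deducing (i) from the dimension count already assembled and then using (i) together with a parity argument to prove (ii).

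For part (i) I would simply feed all $r$ primes into Lemma \ref{dim}: taking $s = r$ there gives $\FPdim(\D_{p_1} \vee \dots \vee \D_{p_r}) = q^j p_1 \cdots p_r = q^j b$. Since we have already established that $\D = \D_{p_1} \vee \dots \vee \D_{p_r}$, this yields $\FPdim \D = q^j b$. Comparing with the known value $\FPdim \D = b q^{n-m}$ and cancelling $b$ forces $q^j = q^{n-m}$, hence $j = n-m$ and $\FPdim \E = q^j = q^{n-m}$, which is (i).

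For part (ii) the first step is to identify the M\"uger center of $\D$. Because $\C$ is non-degenerate, the double centralizer theorem \cite[Theorem 3.10]{drinfeld2010braided} gives $\D' = \C_{nil}'' = \C_{nil}$, so the M\"uger center of $\D$ is $\D \cap \D' = \C_{nil}' \cap \C_{nil}$, which is also the M\"uger center of $\C_{nil}$. We already know from Lemma \ref{dim-e} that $\E$ is contained in this M\"uger center, so it remains to prove the reverse inclusion, for which it suffices to match Frobenius-Perron dimensions. The key step—and the main obstacle—is to show that this M\"uger center is Tannakian, i.e.\ that it contains no fermion. It is symmetric, hence super-Tannakian; if it were not Tannakian it would contain a copy of $\svect$, that is, an invertible object $f$ with $f^{\otimes 2} \cong \1$ generating a fusion subcategory of Frobenius-Perron dimension $2$. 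Since this $\svect$ lies in $\D$ and, by the identification above, also in $\C_{nil}$, its dimension $2$ would divide both $\FPdim \D = b q^{n-m}$ and $\FPdim \C_{nil} = a q^m$. As $q$ is odd this would force $2 \mid b$ and $2 \mid a$, contradicting the fact that $ab = d$ is square-free, so that $\gcd(a,b) = 1$. Therefore the M\"uger center of $\D$ is Tannakian, hence a Tannakian subcategory of $\C$, and by Remark \ref{rmk-qpwr} its dimension is a power of $q$. From $\E \subseteq \D\cap\D' \subseteq \D$ together with $\FPdim \E = q^{n-m}$ and $\gcd(b,q)=1$, this power of $q$ is squeezed to be exactly $q^{n-m}$; since $\E$ has the same dimension and is contained in it, the M\"uger center of $\D$ equals $\E$.

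For the last assertion, once $\E$ is the entire M\"uger center of $\D$, de-equivariantizing by $\E \cong \Rep G$ removes precisely the degeneracy: by M\"uger's modularization (see Subsection \ref{2.4} and \cite{drinfeld2010braided}) the braided category $\D_G$ is non-degenerate; alternatively its M\"uger center has Frobenius-Perron dimension $\FPdim(\D\cap\D')/|G| = q^{n-m}/q^{n-m} = 1$. I expect the fermionic (slightly degenerate) case in the previous paragraph to be the only delicate point, since everything else is bookkeeping with the dimension formulas already in place; it is exactly there that the oddness of $q$ and the square-freeness of $d$ enter.
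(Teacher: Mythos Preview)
Your proof of (i) is correct and matches the paper's verbatim.

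For (ii) you take a detour that introduces a hypothesis not available at this point. Lemma \ref{dim-e-2} is stated and proved in the paper for an arbitrary prime $q$; the assumption $q>2$ only enters later, in Theorem \ref{main}. Your argument that the M\"uger center of $\D$ is Tannakian hinges on $q$ being odd in order to derive a contradiction from $2\mid aq^m$ and $2\mid bq^{n-m}$; when $q=2$ these divisibilities are automatic and give no contradiction, so your proof does not establish the lemma as stated.

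The paper's route is shorter and avoids this issue entirely, because it never needs to know that the M\"uger center $\B=\D\cap\D'$ is Tannakian. Since $\B\subseteq\D$ and $\B\subseteq\D'=\C_{nil}$, the dimension $\FPdim\B$ divides both $\FPdim\D=bq^{n-m}$ and $\FPdim\C_{nil}=aq^m$; as $\gcd(a,b)=1$ and $q\nmid ab$, this alone forces $\FPdim\B\mid q^{n-m}$. Combined with $\E\subseteq\B$ and part (i), one gets $\E=\B$ at once, for any prime $q$. In other words, the conclusion you extract from Remark \ref{rmk-qpwr} after the Tannakian step---that $\FPdim\B$ is a power of $q$ bounded by $q^{n-m}$---falls out of this elementary gcd argument directly, so the fermionic case analysis is unnecessary. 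Your intuition that ``the oddness of $q$ enters here'' is therefore misplaced: in the paper it enters only later, in the proof of Theorem \ref{main}, where one must analyze the braided autoequivalences of a pointed category of prime order.
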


\begin{proof} It follows from Lemma \ref{dim} that
$$\FPdim \left(\D_{p_1} \vee \dots \vee \D_{p_r} \right) =
p_1\dots p_r q^j = bq^{j}.$$
Therefore $\FPdim \D = bq^{n-m} = \FPdim \left(\D_{p_1} \vee \dots \vee \D_{p_r}
\right) = bq^{j}$, which implies that $j = n-m$. This proves (i).

\medbreak
We now show (ii). Let $\B = \D \cap \D'$ be the M\" uger center of $\D$. By
Lemma \ref{dim-e}, $\E \subseteq \B$. On the other hand, $\FPdim \B$ divides
$\FPdim \D = bq^{n-m}$ and $\FPdim \D' = aq^m$, whence $\FPdim \B$ divides
$q^{n-m}$. In view of part (i), this implies that $\E = \B$, as claimed.
\end{proof}

We shall denote by $\rho: \underline G \to {\Aut}_{br}\D_G$ the action by
braided autoequivalences of $\D_G$ such that $\D \cong (\D_G)^G$.

Let also $(\Irr\D_G)_{p_i}$ be the set of isomorphism classes of simple objects
of $(\D_G)_{p_i}$, $1\leq i \leq r$. Thus $(\Irr\D_G)_{p_i}$ is a cyclic group
of order $p_i$ and the action $\rho$ induces an action of $G$ by group
automorphisms on $(\Irr\D_G)_{p_i}$, for all $1\leq i \leq r$.

\begin{lemma}\label{action-g}
The action of $G$ on  $(\Irr \D_G)_{p_i}$ is not trivial, for all $1\leq i \leq
r$.
\end{lemma}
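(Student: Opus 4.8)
The plan is to argue by contradiction. Fix $i$ and suppose that the induced action of $G$ on the cyclic group $(\Irr \D_G)_{p_i}$ is trivial. Since $(\D_G)_{p_i}$ is pointed of Frobenius-Perron dimension $p_i$, this means that $\rho^g$ fixes the isomorphism class of every simple (hence invertible) object of $(\D_G)_{p_i}$, for all $g \in G$. Consequently the action of $G$ preserves each homogeneous component of the canonical $\Zz_{p_i}$-grading of the pointed category $(\D_G)_{p_i}$.

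Using the decomposition \eqref{dec-dg} together with the fact that $(\D_G)_{p_i}$ is stable under $\rho$, I would promote this to a $\Zz_{p_i}$-grading of the whole category $\D_G$, by declaring the remaining factors $(\D_G)_{p_j}$, $j\neq i$, and $(\D_G)_{q}$ to lie in the neutral component. Explicitly, a simple object $L_1\boxtimes\dots\boxtimes L_r\boxtimes L_q$ of $\D_G$ is assigned the class of $L_i$ in $(\Irr\D_G)_{p_i}\cong\Zz_{p_i}$. Because $\rho$ preserves the factorization \eqref{dec-dg} and acts trivially on $(\Irr\D_G)_{p_i}$, the resulting grading $\D_G=\oplus_{a\in\Zz_{p_i}}(\D_G)^a$ is stable under the action of $G$.

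A $G$-stable grading descends to the equivariantization: one has $\D\cong(\D_G)^G=\oplus_{a\in\Zz_{p_i}}((\D_G)^a)^G$, where $((\D_G)^a)^G$ denotes the full subcategory of equivariant objects whose underlying object lies in $(\D_G)^a$. This is a $\Zz_{p_i}$-grading of $\D$, and it is faithful: for every $a$ and every simple object $X\in(\D_G)^a$, the induced object $\bigoplus_{g\in G}\rho^g(X)$, endowed with its canonical equivariant structure, is a nonzero object of $((\D_G)^a)^G$, where one uses once more that $\rho$ preserves $(\D_G)^a$. Since $p_i\geq 2$, this is a nontrivial faithful grading of $\D$, so the universal grading group $\U(\D)$ surjects onto $\Zz_{p_i}$ and is in particular nontrivial. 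This contradicts Proposition \ref{c-nil}(ii), which gives $\D_{ad}=\D$, equivalently $\U(\D)=1$. Hence the action of $G$ on $(\Irr\D_G)_{p_i}$ must be nontrivial, for every $i$.

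I expect the only delicate point to be the faithfulness of the descended grading, that is, the non-vanishing of every component $((\D_G)^a)^G$; this is precisely what the induction functor argument secures, and it relies crucially on the grading being $G$-stable, so that induction does not mix degrees.
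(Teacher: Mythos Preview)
Your proof is correct and takes a somewhat different route from the paper's. Both arguments ultimately exploit Proposition~\ref{c-nil}(ii), i.e., the fact that $\D_{ad}=\D$, but they reach the contradiction differently.

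The paper works inside the subcategory $\D_{p_i}$: from the triviality of the action it deduces that $F(X\otimes X^*)$ is a multiple of $\textbf{1}$ for every simple $X\in\D_{p_i}$, hence $(\D_{p_i})_{ad}\subseteq\E$. Combining this with $\D=\D_{ad}=(\D_{p_1})_{ad}\vee\dots\vee(\D_{p_r})_{ad}$ and $\E\subseteq\D_{p_l}$ for all $l$, one gets $\D\subseteq\D_{p_1}\vee\dots\vee\D_{p_{i-1}}\vee\D_{p_{i+1}}\vee\dots\vee\D_{p_r}$, which contradicts the dimension count of Lemma~\ref{dim}.

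Your approach is more structural: you recognize that a $G$-stable grading of $\D_G$ descends to a faithful grading of the equivariantization $\D\cong(\D_G)^G$, directly contradicting $\U(\D)=1$. This bypasses both the explicit computation with $F(X\otimes X^*)$ and Lemma~\ref{dim}, and it isolates a general principle (descent of $G$-stable gradings through equivariantization) that does the work. The paper's argument, by contrast, stays closer to the concrete subcategories $\D_{p_l}$ already in play and uses the dimension formula established just before. Your faithfulness check via the induction functor is exactly the right ingredient; note that it uses that the original $\Zz_{p_i}$-grading on $\D_G$ is faithful, which is clear since $\D_G$ is pointed and $(\Irr\D_G)_{p_i}$ is the $p_i$-Sylow factor of $\Irr\D_G$.
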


\begin{proof}
Suppose on the contrary that there exists some $1\leq i \leq r$ such that the
action of $G$ on $(\Irr\D_G)_{p_i}$ is trivial, that is, $\rho^g(Y) \cong Y$,
for all simple objects $Y$ of $(\D_G)_{p_i}$. Since $\D_G$ is pointed, then
every such simple object $Y$ is invertible, and thus $Y^* \cong Y^{-1}$.

Let $X$ be a simple object of $\D_{p_i}$ and let $Y$ be a simple constituent of
$F(X)$ in $(\D_G)_{p_i}$.
Since the action of $G$ on $(\Irr\D_G)_{p_i}$ is trivial, then $F(X) \cong
Y^{(d)}$ \cite[Proposition 2.1]{burciu2013fusion}.
Therefore $$F(X \otimes X^*) \cong F(X) \otimes F(X)^* \cong Y^{(d)} \otimes
(Y^{-1})^{(d)} \cong \textbf{1}^{(d^2)}.$$
This implies that the essential image of $(\D_{p_i})_{ad}$ under the functor $F$
is the trivial subcategory $\langle \textbf{1} \rangle$ of $(\D_G)_{p_i}$.
Hence $(\D_{p_i})_{ad} \subseteq \E$.

\medbreak
Since $\D = \D_{ad} = (\D_{p_1})_{ad} \vee \dots \vee (\D_{p_r})_{ad}$, and  $\E
\subseteq \D_{p_l}$, for all $l = 1, \dots, r$, we obtain that
\begin{align*}\D & =  (\D_{p_1})_{ad} \vee \dots \vee (\D_{p_{i-1}})_{ad} \vee
(\D_{p_{i+1}})_{ad} \dots \vee (\D_{p_r})_{ad} \\ & \subseteq \D_{p_1} \vee
\dots \vee \D_{p_{i-1}} \vee \D_{p_{i+1}} \dots \vee \D_{p_r}, \end{align*}
This contradicts Lemma \ref{dim}, since $\FPdim \D = bq^n$ while  $$\FPdim
\D_{p_1} \vee \dots \vee \D_{p_{i-1}} \vee \D_{p_{i+1}} \dots \vee \D_{p_r} =
p_1 \dots p_{i-1}p_{i+1} \dots p_r q^n.$$
This contradiction shows that the action of $G$ on $(\Irr\D_G)_{p_i}$ must be
nontrivial, for all $1\leq i\leq r$, as claimed.
\end{proof}

We next combine the previous lemmas to prove the main result of this section.

\begin{theorem}\label{main}
Let $q$ be an odd prime number and let $d \geq 1$ be a square-free integer.
Suppose that $\C$ is an ASF modular category such that $\FPdim \C = dq^n$, $n
\geq 0$. Then $\C$ is integral and nilpotent.
\end{theorem}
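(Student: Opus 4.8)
The plan is to establish integrality first and then nilpotency, the latter by proving that the centralizer $\D = \C_{nil}'$ of the maximal nilpotent subcategory is trivial. Integrality is immediate: since $q$ is odd and $d$ is square-free, the $2$-adic valuation of $\FPdim \C = dq^n$ equals that of $d$, which is at most $1$, so $4$ does not divide $\FPdim \C$. Hence $\C$ is integral by Corollary \ref{2squarefree}. This is what licenses the use of all the lemmas of this section, which were proved under the standing assumption that $\C$ is integral.

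With $\C$ integral, I would carry over the structure assembled in the preceding lemmas. By Lemma \ref{dim-e-2} we have $\D \cong (\D_G)^G$ with $G$ acting by braided autoequivalences, where $\D_G$ is pointed and non-degenerate, $\E \cong \Rep G$ is the M\"uger center of $\D$ with $\FPdim \E = q^{n-m}$, so that $|G| = q^{n-m}$ is \emph{odd}, and $\D_G \cong (\D_G)_{p_1} \boxtimes \dots \boxtimes (\D_G)_{p_r}$ with each $(\D_G)_{p_i}$ a non-degenerate pointed braided category of prime dimension $p_i$. The goal is then to force $r = 0$ and $G = 1$.

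The key step is to rule out $r \geq 1$, and this is where the hypothesis $q>2$ enters decisively. Each factor $(\D_G)_{p_i}$ is a non-degenerate pointed braided category whose group of invertibles $(\Irr \D_G)_{p_i} \cong \Zz/p_i$ carries a non-degenerate quadratic form determined by the twist. A braided tensor autoequivalence must preserve this quadratic form, so the automorphism of $\Zz/p_i$ induced by any $g \in G$ lies in the subgroup of quadratic-form-preserving automorphisms, which is $\{\pm 1\}\cong\Zz/2$ (and is trivial when $p_i = 2$). Since $G$ has odd order, its image in $\Zz/2$ is trivial, so $G$ acts trivially on $(\Irr \D_G)_{p_i}$. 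This directly contradicts Lemma \ref{action-g}. Therefore $r = 0$, which gives $\D_G \cong \vect$ and hence $\D \cong \Rep G$ with $|G| = q^{n-m}$.

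It remains to show $G = 1$. By Proposition \ref{c-nil} (ii) we have $\U(\D) = 1$, while the universal grading group of $\Rep G$ is isomorphic to $\widehat{Z(G)}$; thus $Z(G) = 1$. A nontrivial $q$-group has nontrivial center, so $G = 1$ and $\D \cong \vect$. Finally, since $\C$ is non-degenerate and $\D = \C_{nil}'$, double centralization yields $\C_{nil} = (\C_{nil}')' = \D' = \vect' = \C$, so $\C$ is nilpotent. The main obstacle is the key step of the third paragraph: recognizing that the braided autoequivalences of a prime-dimensional non-degenerate pointed category are confined to $\{\pm 1\}$, so that the odd order of $G$ is incompatible with the nontrivial action guaranteed by Lemma \ref{action-g}; everything else is dimension bookkeeping together with the structural lemmas already in place.
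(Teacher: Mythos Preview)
Your argument is correct and follows essentially the same route as the paper. Both proofs obtain integrality from Corollary \ref{2squarefree}, invoke the structural Lemmas \ref{dim-e}--\ref{action-g}, and hinge on the same key observation: a braided autoequivalence of a non-degenerate pointed category of prime order $p$ must preserve the quadratic form, hence lands in $\{\pm 1\}\subseteq(\Zz/p)^\times$, which is incompatible with a nontrivial action of an odd-order group. The paper packages this as a proof by contradiction (assume $\C$ not nilpotent, deduce $q=2$), whereas you argue directly, first forcing $r=0$ and then disposing of the residual possibility $\D\cong\Rep G$ via $\U(\D)=1\Rightarrow Z(G)=1\Rightarrow G=1$; this last step makes explicit a point the paper leaves to its contradiction framework.
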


\begin{proof}
Since $q$ is odd and $d$ is square-free, $\FPdim \C$ is not divisible by $4$,
hence $\C$ is integral by Corollary \ref{2squarefree}.

Suppose that $\C$ is not nilpotent. We may assume that $d$ is not divisible by
$q$ and $n \geq 1$.
	
By Lemmas \ref{dim-e}, \ref{dim-e-2} and \ref{action-g}, the M\" uger center
$\E$ of the category $\D = (\C_{nil})'$ is a Tannakian subcategory,  $\E \cong
\Rep G$, where $G$ is a group of order $q^{n-m}$. Furthermore, the
de-equivariantization $\D_G$ is a non-degenerate braided fusion category of
dimension $d$ and the action by braided autoequivalences of the group $G$ on
$\D_G$ induces a nontrivial action on $(\Irr\D_G)_{p_i}$, for all $1\leq i \leq
r$.
Then  $q$ must divide $p_i-1$, for all $1\leq i\leq r$. In particular,  the
primes $p_i$ and therefore also $b$ must be odd.
Notice that the categories $(\D_G)_{p_i}$, $1\leq i \leq r$, in the
decomposition \eqref{dec-dg} of $\D_G$ are also non-degenerate (see Remark
\ref{rmk-mod-dec}).

\medbreak
Let $p = p_i$, $1\leq i \leq r$.
Non-degenerate braided fusion categories of dimension $p$ are classified by
metric groups $(\mathbb F_p, \phi)$, where $\mathbb F_p$ denotes the field with
$p$ elements and $\phi$ is a non-degenerate quadratic from on $\mathbb F_p$.
Moreover, the quadratic form $\phi$ is one of the following:
\begin{equation}\label{phi}
\phi_1(a) = \zeta^{a^2}, \quad \text{or \; } \phi_2(a) = \zeta^{ca^2},
\end{equation}
where $\zeta \in k$ is some primitive $p$th. root of unity and $c \in \mathbb
F_p^\times$ is a quadratic nonresidue. See \cite[Proposition
A.6]{drinfeld2010braided}.

The action by braided autoequivalences $\rho: \underline G \to
{\Aut}_{br}(\D_G)_p$  corresponds to an action $\rho: G \to \Aut(\mathbb F_p)
\cong \mathbb F_p^\times$ of $G$ on $\mathbb F_p$ by group automorphisms
preserving the form $\phi$. In view of the possibilities \eqref{phi}, the
nontrivial action $\rho$ must satisfy
$$\rho(g) = \pm 1 (\text{mod }p),$$ for all $g \in G$. This implies that $G$ is
a $2$-group, that is, $q = 2$.
Thus we obtain that every non-degenerate braided fusion category of
Frobenius-Perron dimension $dq^n$ such that $q$ is odd is nilpotent, as claimed.
\end{proof}

One of the main results in \cite{drinfeld2007group} shows that an integral
nilpotent modular category is group-theoretical. As a consequence of  Theorem
\ref{main} we obtain:

\begin{corollary} Let $q$ be an odd prime number and let $d \geq 1$ be a
square-free integer. Then every ASF modular category of dimension $dq^n$, $n
\geq 0$, is group-theoretical. \qed
\end{corollary}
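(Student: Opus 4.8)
The plan is to obtain this statement as an immediate consequence of Theorem \ref{main} combined with the structural result on integral nilpotent braided fusion categories recalled in the sentence just preceding it. First I would observe that $\C$ satisfies the hypotheses of Theorem \ref{main}: it is an ASF modular category with $\FPdim \C = dq^n$, where $q$ is an odd prime and $d$ is a square-free integer. The theorem therefore guarantees that $\C$ is both \emph{integral} and \emph{nilpotent}. This is the only nontrivial input, and it is precisely where all the work of the present section is spent, in particular the analysis of the induced action of $G$ on the subcategories $(\D_G)_{p_i}$ (Lemmas \ref{dim-e}--\ref{action-g}) and the classification of non-degenerate braided fusion categories of prime dimension used in the proof of Theorem \ref{main} to force $q = 2$ in the non-nilpotent case.

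Having established that $\C$ is integral and nilpotent, I would then invoke the result of \cite{drinfeld2007group}, stated in the line preceding the corollary, that every integral nilpotent modular category is group-theoretical. Applying this to $\C$ directly yields that $\C$ is group-theoretical, which is the desired conclusion. Note that integrality is genuinely needed here, since group-theoretical fusion categories have integer Frobenius-Perron dimensions by construction; it is supplied by Corollary \ref{2squarefree} (via the hypothesis that $q$ is odd and $d$ square-free, so that $4 \nmid \FPdim \C$), which is itself subsumed into Theorem \ref{main}.

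Because the entire difficulty has already been absorbed into Theorem \ref{main}, no genuine obstacle remains at this stage. The single point requiring verification is that the hypotheses of the cited theorem of \cite{drinfeld2007group}, namely integrality and nilpotency of a braided fusion category, coincide exactly with the conclusion of Theorem \ref{main}; since they do, the corollary follows in one line.
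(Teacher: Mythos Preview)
Your proposal is correct and matches the paper's approach exactly: the corollary is stated with a \qed and no separate proof, the paper having already noted in the preceding sentence that an integral nilpotent modular category is group-theoretical by \cite{drinfeld2007group}, so that the result follows immediately from Theorem~\ref{main}. Your additional remarks on where integrality comes from and which lemmas feed into Theorem~\ref{main} are accurate commentary but not needed for the deduction itself.
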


\begin{remark}\label{explicit} Let $\C$ be an ASF modular category. Suppose as
before that $\FPdim \C = dq^n$, where $q > 2$, $n \geq 1$ and $d$ is a
square-free natural number not divisible by $q$. Let $d = p_1 \dots p_r$ be the
decomposition of $d$ as a product of distinct prime numbers. Since $\C$ is
nilpotent, then 
\begin{equation*}\C \cong \C_{p_1} \boxtimes \dots \boxtimes \C_{p_r} \boxtimes
\C_{q^n},\end{equation*} where $\C_{p_i}$ is a (necessarily pointed) modular
category of Frobenius-Perron dimension $p_i$, $1\leq i \leq r$, and $\C_{q^n}$
is a modular category of Frobenius-Perron dimension $q^n$ \cite[Theorem
1.1]{drinfeld2007group}. 

Pointed modular categories of prime dimension $p$ are classified by metric
groups $(\mathbb F_p, \phi)$, where $\mathbb F_p$ denotes the field with $p$
elements and $\phi$ is a non-degenerate quadratic from on $\mathbb F_p$ as in
\eqref{phi}.

Therefore the problem of giving an explicit parameterization of all possible ASF
modular categories $\C$ under the mentioned restrictions reduces to the explicit
determination of modular categories of odd prime power Frobenius-Perron
dimension. This problem is beyond the scope of this paper, and will be postponed
for future consideration. 

However, in view of \cite[Theorem 5.11 and Proposition 6.7
(i)]{naidu2009fusion}, we can say that a modular category of Frobenius-Perron
dimension $q^n$ (which is necessarily group-theoretical
\cite{drinfeld2007group}) can be described by means of  a factorization $G = KH$
of a finite $q$-group $G$ into mutually centralizing normal subgroups $H$ and
$K$, together with a 3-cocycle $\omega$ on $G$ and a $G$-invariant
$\omega$-bicharacter $b: K \times H \to k^{\times}$ such that a certain
symmetric bicharacter associated to $b$ is non-degenerate on $H \cap K$.
\end{remark}

Our next result, Corollary \ref{pq4}, is an application of Theorem
\ref{main} in the dimension  $dq^4$  case. We shall use the following
lemmas\footnote{Lemma \ref{pt-ddqq} was motivated by a question of the
referee, that we acknowledge with thanks; it improves a version of Corollary
\ref{pq4} obtained in a previous version of this paper.}:

\begin{lemma}\label{pt-ddqq} Let $q$ be a prime number. Then the Drinfeld center
of a  fusion category of dimension $q^2$ is a pointed modular category. 
\end{lemma}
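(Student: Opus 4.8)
The plan is to reduce to the case where $\C$ is pointed and then compute its Drinfeld center explicitly. First I would show that $\C$ is integral: in a faithful grading all homogeneous components have equal Frobenius--Perron dimension, so the dimensional grading group $E$ of $\C$ satisfies $\FPdim\C=|E|\,\FPdim\C_{int}$; since $|E|$ is a power of $2$ and $\FPdim\C=q^2$, for odd $q$ this forces $E$ to be trivial, so $\C=\C_{int}$ is integral. Granting integrality, each simple object $X$ has $\FPdim X$ a positive integer dividing $\FPdim\C=q^2$ (by \cite[Prop.\ 8.15]{etingof2005fusion} together with the fact that $\FPdim\C/\FPdim X$ is an algebraic integer), so $\FPdim X\in\{1,q,q^2\}$. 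Since the unit object already contributes $1$ to $\sum_X(\FPdim X)^2=q^2$, no simple object of dimension $q$ or $q^2$ can occur; hence every simple object is invertible and $\C\cong\vect_G^\omega$ for a finite group $G$ with $|G|=q^2$ and some $\omega\in H^3(G,k^\times)$.

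Next I would invoke the equivalence $\Z(\vect_G^\omega)\cong\Rep(D^\omega G)$, a modular category of dimension $|G|^2=q^4$. Because $|G|=q^2$, the group $G$ is abelian, so $G\cong\Zz_{q^2}$ or $G\cong\Zz_q\times\Zz_q$. For abelian $G$ the simple objects of $\Rep(D^\omega G)$ are the pairs $(g,\pi)$ with $g\in G$ (every conjugacy class being a singleton) and $\pi$ an irreducible $\tau_g$-projective representation of $G$, where $\tau_g\in H^2(G,k^\times)$ is the transgression (slant product) of $\omega$ at $g$; the dimension of the corresponding simple object equals $\dim\pi$. Thus $\Rep(D^\omega G)$ is pointed precisely when every $\tau_g$ is a coboundary, so that all irreducible $\tau_g$-projective representations are one-dimensional.

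To conclude I would verify this cocycle condition in the two cases. If $G\cong\Zz_{q^2}$ is cyclic then $H^2(G,k^\times)=0$ and every $\tau_g$ is automatically trivial. If $G\cong\Zz_q\times\Zz_q$ then $H^2(G,k^\times)\cong\Zz_q$ is detected by alternating bicharacters, and the obstruction to the simultaneous triviality of the family $\{\tau_g\}_{g\in G}$ is the totally antisymmetric (``type III'') component of $\omega$, which is governed by $\Lambda^3$ of $G$ and therefore vanishes because $G$ has rank $2$. In either case all $\tau_g$ are coboundaries, so $\Rep(D^\omega G)$ is pointed; and since the Drinfeld center of a fusion category is modular, $\Z(\C)$ is a pointed modular category.

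The main obstacle is the rank-$2$ computation in the last step: one must check directly that the transgressed $2$-cocycles on $\Zz_q\times\Zz_q$ are coboundaries, that is, that a $3$-cocycle on a rank-$2$ group cannot produce a genuinely higher-dimensional projective representation inside the double. This is exactly where the hypothesis $\FPdim\C=q^2$ enters, through $\mathrm{rk}\,G\le 2$ and the consequent vanishing of the antisymmetric part of $\omega$; for groups of rank $\ge 3$ the associated center can fail to be pointed. At $q=2$ one restricts to the integral (hence pointed) categories, the non-integral Ising categories being the only exceptions to the reduction in the first step.
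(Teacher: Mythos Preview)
Your approach is essentially the paper's: reduce to $\C\cong\vect_G^\omega$ with $G$ abelian of order $q^2$, identify $\Z(\C)\cong\Rep(D^\omega G)$, and show that the transgressed $2$-cocycles $\tau_g$ (the paper's $\omega_x$) are all coboundaries so that $\Rep(D^\omega G)$ is pointed. The only difference is in this last verification: the paper writes down explicit generators of $H^3(G,k^\times)$ for $G=\Zz_{q^2}$ and $G=\Zz_q\times\Zz_q$ (taken from \cite{propitius}) and checks directly that each satisfies the symmetry $\omega(x,g,h)=\omega(x,h,g)$, forcing every $\omega_x$ to be a symmetric $2$-cocycle and hence a coboundary; your arguments via $H^2(\Zz_{q^2},k^\times)=0$ and the vanishing of the totally alternating part (an element of $\Lambda^3\widehat G$) in rank~$2$ reach the same conclusion more conceptually and explain why the exponent $2$ is sharp.

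Your caveat at $q=2$ is well placed. The paper simply asserts that a fusion category of dimension $q^2$ is pointed, which fails for Ising categories when $q=2$; indeed the paper's own subsequent remark notes that the Drinfeld center of an Ising category is a non-pointed modular category of dimension~$16$. So the lemma as literally stated is false at $q=2$, and both proofs really establish it only for odd $q$ (or for pointed $\C$ when $q=2$), which is all that Lemma~\ref{dim_q2} actually uses.
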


\begin{proof} 
Suppose that $G$ is any finite group and $\omega \in Z^3(G, k^\times)$ is a
3-cocycle on $G$. Then the Drinfeld center of the pointed fusion category
$\vect_G^\omega$ is equivalent as a braided fusion category to the category
$\Rep (D^\omega G)$ of finite dimensional representations of the twisted quantum
double $D^\omega G$ \cite{majid}.
	
Assume $G$ is abelian. Then the fusion category $\Rep (D^\omega G)$ is pointed
if and only if the class of the 3-cocycle $\omega$ belongs to the subgroup
$H^3(G, k^\times)_{ab}$ of  $H^3(G, k^\times)$, defined as $H^3(G,
k^\times)_{ab} = \cap_{x \in G} D_x$, where for all $x \in G$, $D_x: H^3(G,
k^\times) \to H^2(G, k^\times)$ is the group homomorphism that maps the class of
a 3-cocycle $\omega \in Z^3(G, k^\times)$ to the class of the 2-cocycle
$\omega_x \in Z^2(G, k^\times)$ defined in the form
\begin{equation*}\omega_x(g, h) = 
\frac{\omega(x, g, h)\; \omega(g, h, x)}{\omega(g, x, h)},
\end{equation*}
for all $g, h \in G$. See \cite[Corollary 3.6]{mason-ng}.
	
\medbreak Let now $\C$ be a fusion category of dimension $q^2$. Then
$\C$ is pointed and thus equivalent to the category $\vect_G^\omega$ for some
group $G$ of order $q^2$ and $\omega \in Z^3(G, k^\times)$. The group $G$ must
be isomorphic to one of the groups $G_1 = \langle c: \; c^{q^2} = 1 \rangle
\cong \Zz_{q^2}$ or $G_2 = \langle a, b: \; a^q = b^q = aba^{-1}b^{-1} =
1\rangle \cong \Zz_q \times \Zz_q$. 
	
\medbreak 
The group $H^3(G_1, k^\times) \cong \Zz_{q^2}$ is generated by the class of the
3-cocycle $\omega_{I}$ defined in the form 
\begin{equation*}\omega_{I}(c^{i}, c^{i'}, c^{i''}) = \xi^{i
\left[\frac{i'+i''}{q^2}\right]},
\end{equation*}
for all $0 \leq i, i', i'' \leq q^2-1$, where $\xi \in k^\times$ is a primitive
$q^2$th root of unity and for each $m \in \mathbb Q$, the notation $[m]$ indicates the largest integer less than $m$.
	
Observe that the generator $\omega_I$ satisfies the symmetry condition 
\begin{equation}\label{symm-w}
\omega_I(x, g, h) = \omega_I(x, h, g), \end{equation} for all $x, g, h \in G_1$.
This implies that for all $x \in G_1$ the class of the 2-cocycle $\omega_x$ is
trivial in $H^2(G_1, k^\times)$. We thus obtain that $H^3(G_1, k^\times) =
H^3(G_1, k^\times)_{ab}$ and therefore the category $\Rep (D^\omega G_1)$ is
pointed for every 3-cocycle $\omega$ on $G_1$.
	
\medbreak 
Regarding the group $G_2$, we have that $H^3(G_2, k^\times) \cong \Zz_q \times
\Zz_q \times \Zz_q$ is generated by the classes of the 3-cocycles 
$\omega_I^{(1)}$, $\omega_I^{(2)}$ and $\omega_{II}$ given, respectively, by the
formulas
\begin{align*}
& \omega_{I}^{(1)}(a^{i}b^{j}, a^{i'}b^{j'}, a^{i''}b^{j''}) = \zeta^{i
\left[\frac{i'+i''}{q}\right]}, \\
& \omega_{I}^{(2)}(a^{i}b^{j}, a^{i'}b^{j'}, a^{i''}b^{j''}) = \zeta^{j
\left[\frac{j'+j''}{q}\right]}, \\
& \omega_{II}(a^{i}b^{j}, a^{i'}b^{j'}, a^{i''}b^{j''}) = \zeta^{i
\left[\frac{j'+j''}{q}\right]},
\end{align*}	
for all $0 \leq i, i', i'', j, j', j'' \leq q-1$, where $\zeta \in k^\times$ is
a primitive $q$th root of unity. See for instance \cite[Section
2.3.2]{propitius}.
	
As in the case of $G_1$, we find that if $\omega$ is any of the generators
$\omega_I^{(1)}$, $\omega_I^{(2)}$ or $\omega_{II}$, then $\omega$ satisfies the
symmetry condition \eqref{symm-w} for all elements $x, g, h \in G_2$. Then also
in this case $H^3(G_2, k^\times) = H^3(G_2, k^\times)_{ab}$ and therefore the
category $\Rep (D^\omega G_2)$ is pointed for every 3-cocycle $\omega$ on $G_2$.
This finishes the proof of the lemma.
\end{proof}

\begin{lemma}\label{dim_q2}
Let $q$ be a prime number and let $\C$ be an integral  modular category of
dimension $q^4$.  Then $\C$ is pointed.
\end{lemma}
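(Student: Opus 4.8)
The plan is to realize $\C$ as the Drinfeld center of a fusion category of Frobenius-Perron dimension $q^2$ and then invoke Lemma~\ref{pt-ddqq}. Since $\FPdim\C = q^4$ is a power of the prime $q$ and $\C$ is braided, $\C$ is group-theoretical by \cite{drinfeld2007group}. Being in addition non-degenerate, $\C$ admits a Lagrangian subcategory: by the theory of Lagrangian subcategories developed in \cite{drinfeld2010braided}, a non-degenerate braided fusion category is group-theoretical if and only if it contains a Lagrangian subcategory, and in that case it is equivalent, as a braided fusion category, to the Drinfeld center $\Z(\B)$ of some fusion category $\B$. Comparing dimensions and using $\FPdim\Z(\B) = (\FPdim\B)^2 = q^4$, we obtain $\FPdim\B = q^2$.

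Once the realization $\C\cong\Z(\B)$ with $\FPdim\B = q^2$ is in place, the conclusion is immediate from Lemma~\ref{pt-ddqq}, which asserts precisely that the Drinfeld center of a fusion category of dimension $q^2$ is a pointed modular category; hence $\C$ is pointed. To make this robust and to locate $\B$ among the concrete categories treated in the proof of Lemma~\ref{pt-ddqq}, I would also check that $\B$ is integral. Indeed, if $X\in\B$ is simple and $I\colon\B\to\Z(\B)$ denotes the adjoint of the forgetful functor, then $\FPdim I(X) = \FPdim(\B)\,\FPdim(X) = q^2\,\FPdim(X)$; were $\FPdim X$ irrational, then $I(X)$ would have a simple constituent of irrational Frobenius-Perron dimension, contradicting the integrality of $\C\cong\Z(\B)$. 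An integral fusion category of dimension $q^2$ is pointed by a direct Frobenius-Perron count, since a simple object of dimension $q$ would already satisfy $(\FPdim X)^2 = q^2 = \FPdim\B$, leaving no room for the unit object. Thus $\B\cong\vect_G^\omega$ with $G$ an abelian group of order $q^2$ and $\C\cong\Rep(D^\omega G)$, to which Lemma~\ref{pt-ddqq} applies in its most explicit form.

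The main obstacle is the structural input, namely producing the Lagrangian subcategory, equivalently the braided equivalence $\C\cong\Z(\B)$; everything else is dimension bookkeeping together with the already-established Lemma~\ref{pt-ddqq}. The one point that must be verified with care is that the fusion category $\B$ arising from the center realization genuinely has Frobenius-Perron dimension $q^2$, so that Lemma~\ref{pt-ddqq} applies verbatim; this is guaranteed by the identity $\FPdim\Z(\B) = (\FPdim\B)^2$ together with the hypothesis $\FPdim\C = q^4$.
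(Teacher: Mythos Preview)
Your proof has a genuine gap at its first step. You assert that ``a non-degenerate braided fusion category is group-theoretical if and only if it contains a Lagrangian subcategory,'' and you use the forward direction to produce a Lagrangian in $\C$. That forward direction is false. Any pointed modular category is group-theoretical, yet a pointed modular category $\C(A,\phi)$ has a Lagrangian subcategory only when the metric group $(A,\phi)$ admits a Lagrangian (isotropic, self-orthogonal) subgroup; for instance $\C(\mathbb Z_p,\phi)$, $p$ prime, has none at all. What \emph{is} true is that a fusion category $\D$ is group-theoretical if and only if its Drinfeld center $\Z(\D)$ contains a Lagrangian subcategory; for a modular $\C$ this says only that $\C\boxtimes\C^{\rev}$ has a Lagrangian, not that $\C$ does. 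So from ``$\C$ is group-theoretical'' alone you cannot conclude $\C\cong\Z(\B)$.

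The paper's proof supplies exactly the missing work. It argues by contradiction: assuming $\C$ is not pointed, it uses \cite[Lemma 3.4]{2016DongIntegral} to show that $(\C_{ad})_{pt}$ is symmetric of dimension $q^2$, observes that every non-invertible simple has dimension $q$ so that $\C_{pt}=(\C_{ad})_{pt}$ is the \emph{unique} subcategory of dimension $q^2$, and then (for $q>2$) notes that $\C_{pt}$ is Tannakian and satisfies $\FPdim\C_{pt}'=q^2$, forcing $\C_{pt}'=\C_{pt}$; this is precisely the Lagrangian condition. Only then does \cite[Theorem 4.5]{drinfeld2007group} yield $\C\cong\Z(\B)$ with $\FPdim\B=q^2$, and Lemma~\ref{pt-ddqq} finishes. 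The case $q=2$ is handled separately via the rank-$7$ classification of \cite{Bruillard20162364}, since for $q=2$ the symmetric subcategory $\C_{pt}$ need not be Tannakian. Your appeal to Lemma~\ref{pt-ddqq} at the end is correct once the Lagrangian is in hand; the substantive content you are missing is the construction of that Lagrangian.
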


\begin{proof}
Suppose on the contrary that $\C$ is not pointed.
By \cite[Lemma 3.4]{2016DongIntegral}, $(\C_{ad})_{pt}$ is a symmetric
subcategory of dimension $q^2$. Since $\C$ is modular, the dimension of every
non-invertible simple object is $q$. Hence, $(\C_{ad})_{pt}=\C_{pt}$ is the
unique fusion subcategory of dimension $q^2$.
	
Case $q=2$. $\C$ is of type $(1,4;2,3)$ and hence it is a modular category of
rank $7$. By \cite[Theorem 5.8]{Bruillard20162364}, it should be pointed, a
contradiction.
	
Case $q>2$. In this case the symmetric category $\C_{pt}$ is necessarily
Tannakian.
By \cite[Theorem 3.2]{muger2003structure}, $\FPdim \C_{pt}'=q^2$. Since
$\C_{pt}$ is the unique fusion subcategory of dimension $q^2$, we get
$\C_{pt}'=\C_{pt}$ and hence the subcategory $\C_{pt}$ is Lagrangian. It
follows from \cite[Theorem 4.5]{drinfeld2007group} that $\C$ is equivalent to
the Drinfeld center of a fusion category of dimension $q^2$. Lemma \ref{pt-ddqq}
implies that $\C$ must be pointed, against the assumption. This contradiction
finishes the proof of the lemma. 
\end{proof}

\begin{remark} Observe that if $q$ is an odd prime number, then every modular
category $\C$ of Frobenius-Perron dimension $q^4$ is integral and therefore
pointed, by Lemma \ref{dim_q2}. On the other hand,  the Drinfeld center of an
Ising category provides an example of a non-pointed (and not integral) modular category of
Frobenius-Perron dimension $16$.   
\end{remark}

\begin{corollary}\label{pq4}
Let $\C$ be a modular category of Frobenius-Perron dimension $dq^4$, where $q$
is an odd prime number and $d$ is a square-free integer not divisible by $q$.
Then $\C$ is pointed. 
\end{corollary}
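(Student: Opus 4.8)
The plan is to reduce to the prime-power factors afforded by the nilpotent decomposition and to treat the $q$-part using Lemma \ref{dim_q2}. First, since $q$ is odd and $d$ is square-free with $q \nmid d$, the category $\C$ is an ASF modular category of Frobenius-Perron dimension $dq^4$, so Theorem \ref{main} guarantees that $\C$ is integral and nilpotent.

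Because $\C$ is nilpotent and non-degenerate, I would invoke the decomposition discussed in Remark \ref{explicit} (that is, \cite[Theorem 1.1]{drinfeld2007group} together with Remark \ref{rmk-mod-dec}). Writing $d = p_1 \dots p_r$ as a product of distinct prime numbers, this yields an equivalence of braided fusion categories
\begin{equation*}
\C \cong \C_{p_1} \boxtimes \dots \boxtimes \C_{p_r} \boxtimes \C_{q^4},
\end{equation*}
where each $\C_{p_i}$ is a modular category of prime dimension $p_i$ and $\C_{q^4}$ is a modular category of dimension $q^4$. Since $\C$ is integral, every Deligne factor is integral as well; in particular $\C_{q^4}$ is an integral modular category of dimension $q^4$.

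It then remains to show that each tensor factor is pointed. For the factors $\C_{p_i}$, whose Frobenius-Perron dimension $p_i$ is prime and hence square-free, the remark following Corollary \ref{2squarefree} shows that $\C_{p_i}$ is pointed. For the factor $\C_{q^4}$, Lemma \ref{dim_q2} applies directly and shows that it too is pointed. Since a Deligne tensor product of pointed fusion categories is again pointed, I would conclude that $\C$ is pointed, as desired. I expect no real obstacle here beyond bookkeeping: the essential work has already been carried out in Theorem \ref{main}, which supplies integrality and nilpotency, and in Lemma \ref{dim_q2}, which disposes of the $q^4$ part; the corollary is then a matter of assembling the pointed Deligne factors and verifying that each is integral and of square-free or $q^4$ dimension.
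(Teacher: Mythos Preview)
Your proposal is correct and follows essentially the same argument as the paper: apply Theorem~\ref{main} to get integrality and nilpotency, use the prime-power decomposition of \cite[Theorem~1.1]{drinfeld2007group} together with Remark~\ref{rmk-mod-dec}, and then observe that each prime-dimension factor is pointed while the $q^4$ factor is pointed by Lemma~\ref{dim_q2}. The only cosmetic difference is that the paper simply asserts the prime-dimension factors are pointed (a standard fact), whereas you invoke the remark after Corollary~\ref{2squarefree}; both justifications are fine.
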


\begin{proof} Let $d = p_1 \dots p_s$, where $p_1, \dots, p_s$ are pairwise
distinct prime numbers, distinct from $q$.
By Theorem \ref{main}, $\C$ is integral and nilpotent. Therefore, in view of
\cite[Theorem 1.1]{drinfeld2007group}, $\C\cong
\C_{q^4}\boxtimes\C_{p_1}\boxtimes\cdots\boxtimes\C_{p_s}$, where $\C_{t}$ is a
modular category of dimension $t$ (see Remark \ref{rmk-mod-dec}).
In particular, the categories $\C_{p_1}, \dots, \C_{p_s}$ are pointed.
It follows from Lemma \ref{dim_q2} that $\C_{q^4}$ is also pointed, which
implies the statement.
\end{proof}

\section{Structure of a strictly weakly integral ASF modular
category}\label{ASF_fu_ca}
\setcounter{equation}{0}
Let $\D$ be a fusion category. If $\D$ is not pointed and $X\otimes Y$ is a
direct sum of invertible objects, for all non-invertible simple objects
$X,Y\in\D$, then $\D$ is called a generalized Tambara-Yamagami fusion category.
Generalized Tambara-Yamagami fusion categories were classified in
\cite{liptrap2010generalized}, up to equivalence of tensor categories. Recently,
they have been further studied in \cite{natale2013faithful}. In particular,
modular generalized Tambara-Yamagami fusion categories were classified in terms
of Ising modular categories and pointed modular categories.

\begin{proposition}\label{IsingTensorPointed}
Let $\D$ be a weakly integral  fusion category such that $\FPdim(\D_{ad})=2$.
Then
$\D$ is a generalized Tambara-Yamagami fusion category.
In particular, if $\D$ is modular, then $\D\cong\I\boxtimes\B$, where $\I$ is an
Ising fusion category and $\B$ is a pointed modular  category.
\end{proposition}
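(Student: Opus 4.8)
The plan is to check directly that $\D$ meets the two defining requirements of a generalized Tambara-Yamagami fusion category, and then to read off the modular case from the known classification.

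First I would extract the structural consequences of the hypothesis $\FPdim(\D_{ad}) = 2$. Since the adjoint subcategory of a pointed fusion category is trivial (every simple object is invertible, so $X \otimes X^* \cong \textbf{1}$), the equality $\FPdim(\D_{ad}) = 2$ already forces $\D$ to be non-pointed; moreover a fusion category of Frobenius-Perron dimension $2$ is pointed with exactly two invertible objects, so $\D_{ad} \cong \vect_{\Zz_2}$. I would then pass to the universal grading $\D = \oplus_{g \in \U(\D)} \D_g$, whose neutral component is $\D_{ad}$. Because all homogeneous components of a faithful grading share the same Frobenius-Perron dimension \cite[Proposition 8.20]{etingof2005fusion}, each component satisfies $\FPdim \D_g = 2$.

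The decisive step is a local analysis of these components. Using that every nonzero simple object has Frobenius-Perron dimension at least $1$, the identity $\sum_S (\FPdim S)^2 = 2$ (the sum running over the simple objects $S$ of a fixed component) leaves only two possibilities for a component: it contains either two invertible objects, or a single simple object of Frobenius-Perron dimension $\sqrt 2$. In particular every non-invertible simple object $X$ is the unique simple object of its component and has $\FPdim X = \sqrt 2$. Now take non-invertible simple objects $X \in \D_g$ and $Y \in \D_h$; then $X \otimes Y$ lies in $\D_{gh}$ and has Frobenius-Perron dimension $2$. If $\D_{gh}$ were of the second type, with unique simple object $Z$ of dimension $\sqrt 2$, then $X \otimes Y \cong Z^{\oplus m}$ with $m \sqrt 2 = 2$, forcing the non-integer multiplicity $m = \sqrt 2$, which is impossible. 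Hence $\D_{gh}$ consists of two invertible objects, and consequently $X \otimes Y$ is a direct sum of invertible objects. Together with non-pointedness, this shows that $\D$ is a generalized Tambara-Yamagami fusion category.

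For the final assertion I would invoke the classification of \emph{modular} generalized Tambara-Yamagami fusion categories established in \cite{natale2013faithful}, according to which every such category is equivalent to a Deligne tensor product $\I \boxtimes \B$ with $\I$ an Ising fusion category and $\B$ a pointed modular category. Since $\D$ has just been shown to be a generalized Tambara-Yamagami category and is modular by hypothesis, the desired decomposition $\D \cong \I \boxtimes \B$ follows immediately. The only genuinely delicate point in the argument is the component analysis --- in particular the integrality observation that rules out an object of dimension $\sqrt 2$ as the product $X \otimes Y$; the substantive content of the modular refinement is supplied by the cited classification, so I do not anticipate further obstacles.
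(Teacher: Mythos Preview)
Your argument is correct and shares its scaffolding with the paper's proof: both begin from the universal grading $\D = \oplus_{g \in \U(\D)} \D_g$, observe that each component has Frobenius--Perron dimension $2$, and classify components as either a pair of invertibles or a single simple of dimension $\sqrt 2$. The divergence is only in the final step. The paper invokes the dimensional grading of Gelaki--Nikshych: since the only dimensions are $1$ and $\sqrt 2$, the dimensional grading group has order $2$, the neutral component $\D_0 = \D_{int}$ is pointed, and any product of two non-invertible simples lands in $\D_0$ and is therefore a sum of invertibles. You instead stay inside the universal grading and rule out the bad case by the multiplicity contradiction $m\sqrt 2 = 2$. Your route is marginally more self-contained, avoiding the appeal to the $E$-grading machinery, while the paper's route makes the $\Zz_2$-extension structure of $\D$ over a pointed category explicit; both reach the same conclusion and then cite \cite{natale2013faithful} for the modular case. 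One cosmetic point: writing $\D_{ad} \cong \vect_{\Zz_2}$ tacitly fixes the associator, but you only use that $\FPdim \D_{ad} = 2$, so this does not affect the argument.
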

\begin{proof}
Since $\FPdim(\D_{ad})=2$, then $\D$ is not pointed.
Let $\D=\oplus_{g\in\U(\D)}\D_g$ be the universal grading of $\D$. Then
$\FPdim(\D_g)=2$ for all $g\in \U(\D)$. Since $\D$ is weakly integral and not
pointed, the FP dimension of every simple object is a square root of some
positive integer. Hence, every component $\D_g$ of the universal grading either
contains two non-isomorphic invertible objects, or it contains a unique
$\sqrt{2}$-dimensional simple object. Thus we get that the Frobenius-Perron
dimension of any simple object of $\D$ is $1$ or $\sqrt 2$. Hence $\C_{int}$ is
a pointed fusion category.

Considering the dimensional grading $\D=\oplus_{h\in E}\D_h$, we get that the
order of $E$ is $2$. We also get that $\D_0$ contains $1$-dimensional simple
objects, and $\D_1$ contains $\sqrt{2}$-dimensional simple objects. This implies
that $\D$ is a $\mathbb{Z}_2$-extension of a pointed fusion category. Let
$X,Y\in\D_1$ be 
simple objects.  Then $X\otimes Y\in\D_0$ is a direct sum of invertible objects.
This proves that $\D$ is a generalized Tambara-Yamagami fusion category.

If $\D$ is modular then \cite[Theorem 5.5]{natale2013faithful} (see also
\cite[Theorem 3.4]{2016Dongbraided}) shows that $\D\cong\I\boxtimes\B$ as
described.
\end{proof}

Let $\C$ be an ASF modular category of Frobenius-Perron dimension $dq^n$. In the
case when $\C$ is integral, the structure of $\C$ has been obtained in
\cite[Corollary 4.2]{2016DongIntegral}: $\C$ is either pointed, or equivalent to
a $G$-equivariantization of a nilpotent fusion category of nilpotency class $2$,
where $G$ is a $q$-group. Note in addition that if $\C$ is strictly weakly
integral, then $q = 2$, by Corollary \ref{2squarefree}.

\medbreak
For the rest of this section, $\C$ will denote a weakly integral ASF modular
category of Frobenius-Perron dimension $d2^n$.

\begin{lemma}\label{lem1}
The dimension of every simple object of $\C_{ad}$ is a power of $2$.
\end{lemma}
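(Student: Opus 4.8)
The plan is to reduce the statement to two facts: that every simple object of $\C_{ad}$ is integral, and that $(\FPdim X)^2$ divides $\FPdim \C = d2^n$ for every simple object $X$ of the modular category $\C$. Once both are available, the square-freeness of the odd integer $d$ forces the odd part of $\FPdim X$ to be trivial.

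First I would establish integrality of $\C_{ad}$ using the dimensional grading $\C = \oplus_{g \in E}\C_g$, whose neutral component is $\C_{int}$ and is the unique component all of whose objects are integral. For any simple object $X \in \C_g$ one has $X^* \in \C_{g^{-1}}$, so that $X \otimes X^* \in \C_g \otimes \C_{g^{-1}} \subseteq \C_e = \C_{int}$. Since $\C_{ad}$ is generated by the objects $X \otimes X^*$, with $X$ running over $\Irr(\C)$, it follows that $\C_{ad} \subseteq \C_{int}$; in particular $\FPdim X \in \Zz$ for every simple $X \in \C_{ad}$. With integrality in hand, I would take a simple $X \in \C_{ad}$, write $\FPdim X = m \in \Zz$ with $m > 0$, and suppose for contradiction that some odd prime $p$ divides $m$. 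Then $p^2 \mid m^2 = (\FPdim X)^2 \mid d2^n$, and since $p$ is odd this gives $p^2 \mid d$, contradicting that $d$ is square-free. Hence no odd prime divides $m$, so $\FPdim X$ is a power of $2$.

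The step I expect to be the main obstacle is the divisibility $(\FPdim X)^2 \mid \FPdim \C$, and this is precisely where non-degeneracy of $\C$ is indispensable: the statement is false for general weakly integral braided categories, as $\Rep(S_5)$ is integral and symmetric yet $6^2 \nmid 120$. For a modular category the divisibility follows from the formal codegrees of $K_0(\C)$. Indeed, $\C$ is weakly integral, hence pseudo-unitary, so $\dim X = \FPdim X$; by modularity the irreducible characters of $K_0(\C)$ are the columns of the $S$-matrix, and a short orthogonality computation shows that the formal codegree attached to $X$ equals $\FPdim \C / (\FPdim X)^2$. Since formal codegrees are algebraic integers and this number is rational, it must be an integer, which is exactly the desired divisibility. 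In the final write-up I would either cite this divisibility as a known property of weakly integral modular categories or include the brief codegree argument above.
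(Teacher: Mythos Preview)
Your proof is correct and follows essentially the same approach as the paper: both show $\C_{ad}\subseteq\C_{int}$ so that $\FPdim X$ is an integer, then use the divisibility $(\FPdim X)^2\mid\FPdim\C=d2^n$ together with the square-freeness of $d$ to conclude. The paper simply cites \cite[Theorem~2.11]{etingof2011weakly} for the divisibility step, whereas you supply a formal-codegree justification; either way the argument is the same.
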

\begin{proof}
Let $X$ be a simple object of $\C_{ad}$. It is known that $\C_{ad}\subseteq
\C_{int}$ is an integral fusion subcategory of $\C$. Hence $\FPdim X$ is an
integer. On the other hand, $X$ is also a simple object of $\C$. It follows that
$\FPdim X^2$ divides $\FPdim \C=d2^n$ by \cite[Theorem 2.11]{etingof2011weakly}.
Hence, $\FPdim X$ is a power of $2$.
\end{proof}

The following lemma is a special case of \cite[Lemma 4.1]{bpr}:

\begin{lemma}\label{lem2} If $|\U(\C)|$ is divisible by $2^n$ then $\C$ is
pointed.
\end{lemma}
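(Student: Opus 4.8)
The plan is to use the modularity of $\C$ to turn the hypothesis $2^n \mid |\U(\C)|$ into an \emph{oddness} statement about $\FPdim\C_{ad}$, and then to combine Lemma \ref{lem1} with the square-freeness of $d$ to force $\C_{ad}=\vect$. One option is simply to quote \cite[Lemma 4.1]{bpr}, of which this is the case $p=2$; below I outline a self-contained argument.

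First I would record the two dimension identities supplied by modularity. By \cite[Theorem 6.2]{gelaki2008nilpotent} we have $\FPdim\C_{pt}=|\U(\C)|$, and since $\C$ is non-degenerate, $\C_{ad}$ is the M\"uger centralizer of $\C_{pt}$, so the centralizer dimension formula gives $\FPdim\C_{pt}\cdot\FPdim\C_{ad}=\FPdim\C=d2^n$. Because $\FPdim\C_{pt}=|\U(\C)|$ divides $d2^n$ and is divisible by $2^n$ by assumption, I may write $|\U(\C)|=2^n e$ with $e\mid d$; here $e$ is odd and $\FPdim\C_{ad}=d/e$ is an odd divisor of $d$.

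Next I would show that $\C_{ad}$ is pointed. Let $X$ be a simple object of $\C_{ad}$. By Lemma \ref{lem1}, $\FPdim X$ is a power of $2$. On the other hand, $\C_{ad}$ is integral, so \cite[Theorem 2.11]{etingof2011weakly} applied to $\C_{ad}$ gives $(\FPdim X)^2\mid\FPdim\C_{ad}$, whence $\FPdim X\mid\FPdim\C_{ad}=d/e$, an odd integer. A power of $2$ dividing an odd number is $1$, so $\FPdim X=1$ and $\C_{ad}$ is pointed.

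Finally I would close the argument using square-freeness, which is the step I expect to be the crux. Since $\C_{ad}$ is pointed it is contained in the maximal pointed subcategory $\C_{pt}$, so by \cite[Proposition 8.15]{etingof2005fusion} $\FPdim\C_{ad}=d/e$ divides $\FPdim\C_{pt}=2^n e$; being odd, $d/e$ must then divide $e$. As $d$ is square-free and $d=(d/e)\cdot e$, we have $\gcd(d/e,e)=1$, so $d/e\mid e$ forces $d/e=1$. Thus $\FPdim\C_{ad}=1$, that is, $\C$ is pointed. The substance of the proof is exactly this last coprimality step: square-freeness of $d$ is what prevents a nontrivial (necessarily symmetric) pointed adjoint subcategory from surviving, while all the preceding reductions are routine applications of the centralizer dimension formula and Lemma \ref{lem1}.
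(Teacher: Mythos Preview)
Your proposal is correct. The paper gives no proof beyond the citation to \cite[Lemma 4.1]{bpr} that you already mention; your self-contained argument---computing $\FPdim\C_{ad}=d/e$ as an odd divisor of $d$, using Lemma~\ref{lem1} together with $(\FPdim X)^2\mid\FPdim\C_{ad}$ to force $\C_{ad}$ pointed, and then invoking square-freeness of $d$ to get $d/e=1$---is sound and is essentially the content of that cited lemma in the case $p=2$.
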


\begin{lemma}\label{lem3}
Assume that $\C$ is not pointed. Then $(\C_{ad})_{pt}$ is not trivial.
\end{lemma}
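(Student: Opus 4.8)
The plan is to argue by contradiction: I would assume that $(\C_{ad})_{pt}$ is trivial, i.e. that the unit object is the only invertible object of $\C_{ad}$, and deduce that $\C$ must be pointed, contrary to hypothesis. Since $\C_{ad} = \vect$ if and only if $\C$ is pointed, the whole argument reduces to showing that, under the assumption that $\C_{ad}$ has no nontrivial invertible objects, the category $\C_{ad}$ can contain no non-invertible simple object either.

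First I would recall from Lemma \ref{lem1} that every simple object of $\C_{ad}$ has Frobenius-Perron dimension a power of $2$; moreover $\C_{ad}$ is an integral, hence weakly integral, fusion category. Under the contradiction hypothesis the simple objects of $\C_{ad}$ are therefore the unit object $\mathbf{1}$, of dimension $1$, together with non-invertible simples of dimension $2^{a}$ with $a \geq 1$. Computing the Frobenius-Perron dimension of $\C_{ad}$ as the sum of the squares of the dimensions of its simple objects, I obtain
\begin{equation*}
\FPdim \C_{ad} = 1 + \sum_{X} (\FPdim X)^2,
\end{equation*}
where $X$ runs over the non-invertible simple objects of $\C_{ad}$. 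Since each term $(\FPdim X)^2 = 4^{a_X}$ is divisible by $4$, this forces $\FPdim \C_{ad} \equiv 1 \pmod 4$; in particular $\FPdim \C_{ad}$ is odd.

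The contradiction then comes from divisibility. If $\C_{ad}$ had a non-invertible simple object $X$, applying \cite[Theorem 2.11]{etingof2011weakly} to the weakly integral fusion category $\C_{ad}$ would give that $(\FPdim X)^2 = 4^{a}$ divides $\FPdim \C_{ad}$, so that $4 \mid \FPdim \C_{ad}$, contradicting the parity computed above. Hence $\C_{ad}$ has no non-invertible simple object, i.e. $\C_{ad}$ is pointed; combined with the assumption $(\C_{ad})_{pt} = \vect$ this yields $\C_{ad} = \vect$, and therefore $\C$ is pointed, the desired contradiction.

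I expect the only delicate point to be the bookkeeping that lets \cite[Theorem 2.11]{etingof2011weakly} be applied to $\C_{ad}$ as a fusion category in its own right (rather than to $\C$), together with the observation that it is precisely the absence of nontrivial invertibles that makes $\FPdim \C_{ad}$ odd: these two facts pull $\FPdim \C_{ad}$ in incompatible directions. An alternative, more structural route would be to use that for the modular category $\C$ one has $\C_{ad} = (\C_{pt})'$, so that $(\C_{ad})_{pt} = \C_{ad} \cap \C_{pt}$ is the M\"uger center of $\C_{pt}$; triviality of $(\C_{ad})_{pt}$ would then mean that $\C_{pt}$ is non-degenerate and hence splits off as a direct factor $\C \cong \C_{pt} \boxtimes \C_{ad}$, after which the same parity argument applies to the non-degenerate factor $\C_{ad}$. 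I would keep the first, self-contained argument as the main line.
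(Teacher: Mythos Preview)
Your argument and the paper's share the same counting core: using Lemma~\ref{lem1} to write $\FPdim\C_{ad}=a_0+4a_1+16a_2+\cdots$, where $a_0=|(\C_{ad})_{pt}|$. The paper first invokes Lemma~\ref{lem2} to see that $\FPdim\C_{ad}$ is even and concludes $a_0\geq 2$. You instead assume $a_0=1$, deduce $\FPdim\C_{ad}$ is odd, and then look for a contradiction by applying \cite[Theorem~2.11]{etingof2011weakly} to $\C_{ad}$.

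That last step is the one genuine soft spot. The divisibility statement $(\FPdim X)^2\mid\FPdim\D$ does \emph{not} follow from $\D$ being weakly integral: already $\D=\Rep S_3$ has a $2$-dimensional simple while $\FPdim\D=6$. So ``$\C_{ad}$ is integral, hence weakly integral'' is not a valid justification for invoking the theorem on $\C_{ad}$; you need a stronger hypothesis (in the paper's own use of that theorem in Lemma~\ref{lem1}, it is applied to the ambient modular category $\C$, not to a subcategory). Your ``alternative, more structural route'' actually repairs this cleanly: if $(\C_{ad})_{pt}$ is trivial then $\C_{pt}$ is non-degenerate, $\C\cong\C_{pt}\boxtimes\C_{ad}$, and now $\C_{ad}$ is itself modular, so the theorem applies to it just as it did to $\C$. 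Alternatively, once you know $\FPdim\C_{ad}$ is odd you can simply note $\FPdim\C_{ad}=d\,2^n/|\U(\C)|$ forces $2^n\mid|\U(\C)|$ and quote Lemma~\ref{lem2}; that is exactly the paper's path, and it avoids the delicate point entirely.
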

\begin{proof}
By Lemma \ref{lem2}, $|\U(\C)|$ is not divisible by $2^n$. Hence  $\FPdim
\C_{ad} $ is divisible by $2$. By Lemma \ref{lem1}, the dimension of every
simple object of $\C_{ad}$ is a power of $2$. Let $1,2,2^2,\cdots,2^t$ be all
possible dimensions of simple objects of $\C_{ad}$, and let
$a_0,a_1,a_2,\cdots,a_t$ be the number of non-isomorphic simple objects of these
dimensions. Then we have $a_0+a_12^2+a_22^4+\cdots+a_t2^{2t}=\FPdim \C_{ad} $.
Since the right hand side is divisible by $2$, we get that $a_0$ is also
divisible by $2$. This shows that $(\C_{ad})_{pt}$ is not trivial.
\end{proof}

\begin{proposition}\label{ExistNntrivTannSub}
Assume that $\C$ is not pointed and $\FPdim \C_{ad} >2$. Then $\C$ contains a
non-trivial Tannakian subcategory.
\end{proposition}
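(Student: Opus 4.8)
The plan is to find the Tannakian subcategory inside the maximal pointed subcategory of $\C_{ad}$. Since $\C$ is modular we have $(\C_{ad})' = \C_{pt}$ (see \cite{gelaki2008nilpotent}), so the M\"uger center of $\C_{ad}$ is $\C_{ad} \cap (\C_{ad})' = \C_{ad} \cap \C_{pt} = (\C_{ad})_{pt}$. Thus $(\C_{ad})_{pt}$ is a symmetric fusion category, and it is non-trivial by Lemma \ref{lem3}. I would then split according to its Frobenius-Perron dimension. If $\FPdim (\C_{ad})_{pt} > 2$, then as recalled in Subsection \ref{sec2.3} the symmetric category $(\C_{ad})_{pt}$ contains a non-trivial Tannakian subcategory, which is a non-trivial Tannakian subcategory of $\C$; and if $\FPdim (\C_{ad})_{pt} = 2$ with $(\C_{ad})_{pt} \cong \Rep \Zz_2$, then $(\C_{ad})_{pt}$ is itself the desired subcategory. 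In either case we are done.

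The remaining, and only delicate, case is $(\C_{ad})_{pt} \cong \svect$. Here $\C_{ad}$ has exactly two invertible simple objects, while by Lemma \ref{lem1} every other simple object of $\C_{ad}$ has Frobenius-Perron dimension a positive power of $2$; a dimension count therefore gives $\FPdim \C_{ad} \equiv 2 \pmod 4$. Writing $\FPdim \C_{ad} = 2m$ and using that $\FPdim \C_{ad}$ divides $\FPdim \C = d2^n$ with $d$ odd and square-free, I obtain that $m$ is odd and square-free, and $m > 1$ since $\FPdim \C_{ad} > 2$. Hence in this case $\C_{ad}$ is a non-pointed integral braided fusion category of square-free dimension $2m$ whose M\"uger center is $\svect$.

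I expect this configuration to be the main obstacle, and the crux of the argument is to rule it out. The plan is to exploit the square-freeness of $\FPdim \C_{ad} = 2m$: as in the Remark following Corollary \ref{2squarefree}, a weakly integral modular category of square-free dimension is pointed, and the aim is to upgrade this to the slightly degenerate category $\C_{ad}$ (whose M\"uger center is only $\svect$) and conclude that $\C_{ad}$ must itself be pointed, contradicting the presence of a non-invertible simple object. A concrete way to carry this out is to pass to the metric group $(G, q)$ of $\C_{pt}$: its associated bicharacter has radical $\langle \delta \rangle \cong \Zz_2$ with $\delta$ a fermion, and unless the $2$-part of $(G, q)$ is anisotropic of very small order the quadratic form $q$ admits a non-trivial isotropic subgroup, which generates a non-trivial Tannakian subcategory of $\C_{pt} \subseteq \C$. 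The finitely many exceptional forms force $\FPdim \C \in \{4d, 8d\}$, and these are handled by the classification of modular categories of dimensions $4d$ and $8d$ cited in the Introduction. Making the passage from $\C_{ad}$ (respectively from $(G, q)$) rigorous is the step I expect to require the most care.
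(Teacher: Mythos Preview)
Your setup and the handling of the first two cases (either $\FPdim(\C_{ad})_{pt}>2$, or $(\C_{ad})_{pt}\cong\Rep\Zz_2$) coincide with the paper's proof. The divergence is entirely in the case $(\C_{ad})_{pt}\cong\svect$, where you propose an indirect attack via the metric group of $\C_{pt}$ and the low-dimensional classifications, and you yourself flag this as the unfinished step. It is genuinely incomplete: producing a non-trivial isotropic subgroup in a pre-metric group whose radical is generated by a fermion is not automatic (the odd part of $G$, having square-free order, is anisotropic, so everything hinges on the $2$-part), and deferring the residual small cases to the $4d$, $8d$ classifications is circular in spirit, since those papers use arguments of precisely the type you are trying to establish here.

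The paper eliminates the $\svect$ case directly, with no detour. Since $\FPdim\C_{ad}>2$ and $(\C_{ad})_{pt}=\{\1,\delta\}$, there exists a non-invertible simple $X\in\C_{ad}$; by Lemma~\ref{lem1} its dimension is a positive power of $2$. In the decomposition of $X\otimes X^*$ the object $\1$ appears with multiplicity one and every non-invertible constituent has even dimension, so parity of $\FPdim(X\otimes X^*)=(\FPdim X)^2$ forces $\delta$ to appear as well; equivalently $\delta\otimes X\cong X$. But $\delta\in(\C_{ad})_{pt}\subseteq\C_{pt}=(\C_{ad})'$, so $\delta$ centralizes $X$, and \cite[Lemma~5.4]{muger2000galois} then rules out $\delta$ being a fermion. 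Hence $(\C_{ad})_{pt}\not\cong\svect$ after all, and only the Tannakian alternative $\Rep\Zz_2$ survives. This is the missing idea: rather than hunting for a Tannakian subcategory elsewhere in $\C$, one shows that the $\svect$ configuration is itself impossible, and the whole metric-group discussion becomes unnecessary.
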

\begin{proof}
By Lemma \ref{lem3} and \cite[Lemma 2.4]{2016GonNatale}, $(\C_{ad})_{pt}$ is a
non-trivial symmetric fusion category. If $\FPdim (\C_{ad})_{pt}>2$ then
$(\C_{ad})_{pt}$ contains a non-trivial Tannakian subcategory with
Frobenius-Perron dimension $\FPdim (\C_{ad})_{pt}/2$(see the exposition in
Section \ref{sec2.3}). Hence we may assume in the rest that $\FPdim
(\C_{ad})_{pt}=2$.

\medbreak
Since we have assumed that $\FPdim \C_{ad} >2$, $\C_{ad}$ contains
non-invertible simple objects. Let $\delta$ be the unique non-trivial invertible
simple object in $(\C_{ad})_{pt}$. Then $\delta$ appears in the decomposition of
$X\otimes X^*$ for every non-invertible $X\in \Irr(\C_{ad})$. To see this, we
just notice that the Frobenius-Perron dimension of every non-invertible simple
object of $\C_{ad}$ is a power of $2$, by Lemma \ref{lem1}. Therefore, we have

\begin{equation}\label{eqdelta}
\begin{split}
\delta\otimes X\cong X, \quad X\in \Irr(\C_{ad})/\{1,\delta\}.
\end{split}
\end{equation}

Since $\delta \in \C_{pt} = \C_{ad}'$, it follows from  \cite[Lemma
5.4]{muger2000galois} that $(\C_{ad})_{pt}$ cannot be the category $\svect$ of
super vector spaces. Hence, $(\C_{ad})_{pt}$ must be a Tannakian subcategory.
\end{proof}

\begin{theorem}\label{symm}
Let $\C$ be a strictly weakly integral ASF modular category. Then $\C$ fits into
one of the following classes:

(1)\, $\C$ is equivalent to a Deligne tensor product $\I\boxtimes\B$, where $\I$
is an Ising fusion category and $\B$ is a pointed modular  category.

(2)\, $\C$ is equivalent to a $G$-equivariantization of a braided $G$-crossed 
fusion category
$\oplus_{g\in G}\D_g$, where $G$ is a $2$-group and $\D_e$ is a pointed modular 
category.

(3)\, $\C$ is equivalent to a $G$-equivariantization of
a braided $G$-crossed  fusion category
$\oplus_{g\in G}\D_g$, where $G$ is a $2$-group and $\D_e\cong\I\boxtimes\B$,
where $\I$ is an Ising fusion category and $\B$ is a pointed modular  category.
\end{theorem}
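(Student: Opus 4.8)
The plan is to split the argument according to the size of the adjoint subcategory $\C_{ad}$, after first pinning down that $\C$ must be non-pointed with $4 \mid \FPdim \C$. Since $\C$ is strictly weakly integral, Corollary \ref{2squarefree} forces $4 \mid \FPdim \C$, so together with the ASF hypothesis I may write $\FPdim \C = d2^n$ with $d$ odd and square-free and $n \ge 2$; in particular $\C$ is not pointed, whence $\FPdim \C_{ad} \ge 2$. If $\FPdim \C_{ad} = 2$, then Proposition \ref{IsingTensorPointed} applies with $\D = \C$ and yields $\C \cong \I\boxtimes\B$, which is case (1). Thus the substance of the theorem lies in the case $\FPdim \C_{ad} > 2$.

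In that case, Proposition \ref{ExistNntrivTannSub} provides a nontrivial Tannakian subcategory of $\C$, and I would choose one, $\E \cong \Rep G$, of \emph{maximal} Frobenius-Perron dimension. Since $(\FPdim \E)^2$ divides $\FPdim \C = d2^n$ and $d$ is odd and square-free, $\FPdim \E$ must be a power of $2$, so $G$ is a $2$-group. Following Subsection \ref{2.4}, I would de-equivariantize $\C$ by $\E$ to obtain a braided $G$-crossed fusion category $\C_G = \oplus_{g\in G}\D_g$ whose neutral component $\D_e = \C_G^0$ is non-degenerate (because $\C$ is non-degenerate and, $\E$ being Tannakian, the associated grading is faithful). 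As $\FPdim \D_e = d2^m$ for some $m \ge 0$ is again of ASF type with $q = 2$, the category $\D_e$ is a non-degenerate, hence modular, ASF category.

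The heart of the argument is the claim that this maximal choice forces $\FPdim (\D_e)_{ad} \le 2$. Granting the claim, if $\FPdim (\D_e)_{ad} = 1$ then $\D_e$ is a pointed modular category, giving case (2); and if $\FPdim (\D_e)_{ad} = 2$ then Proposition \ref{IsingTensorPointed} gives $\D_e \cong \I\boxtimes\B$, giving case (3). To establish the claim I would argue by contradiction: if $\FPdim (\D_e)_{ad} > 2$, then applying the reasoning of Lemma \ref{lem3} and Proposition \ref{ExistNntrivTannSub} to the non-pointed modular category $\D_e$ produces a nontrivial Tannakian subcategory inside $((\D_e)_{ad})_{pt}$. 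Since this symmetric subcategory is attached canonically to $\D_e$, it is stable under the action of $G$ by braided autoequivalences, and so is its unique maximal Tannakian part $\bar\E$. Equivariantizing $\bar\E$ should then yield a Tannakian subcategory of $\C$ of dimension $|G|\,\FPdim \bar\E > |G| = \FPdim \E$, contradicting maximality.

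The main obstacle is precisely this last step: verifying that the equivariantization of a $G$-stable Tannakian subcategory $\bar\E \subseteq \D_e$ is again a genuinely Tannakian, rather than merely symmetric or super-Tannakian, subcategory of $\C$, and that it strictly enlarges $\E$. Here I would exploit the fact that $\E \cong \Rep G$ lies in the M\"uger center of its own centralizer $\E' \cong (\D_e)^G$, so that the braiding on $\bar\E^G \subseteq \E'$ is governed entirely by the symmetric braiding of $\bar\E$ in $\D_e$; this forces $\bar\E^G$ to be symmetric. Identifying the $G$-action on $\Rep H \cong \bar\E$ with genuine group-theoretic data should then exhibit $\bar\E^G$ as the representation category of a group extension, hence Tannakian. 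Making this identification precise, and ruling out the super-Tannakian alternative in the borderline case $\FPdim \bar\E = 2$, is the delicate point on which the proof turns.
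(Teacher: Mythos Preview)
Your overall strategy matches the paper's proof closely: split on $\FPdim \C_{ad}$, pass to a maximal Tannakian subcategory $\Rep G$ when $\FPdim \C_{ad} > 2$, and analyze the neutral component $\D_e$ of the de-equivariantization. The substantive difference lies in how the bound $\FPdim(\D_e)_{ad}\le 2$ is obtained.

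You attempt to prove directly that a $G$-stable Tannakian subcategory $\bar\E\subseteq\D_e$ equivariantizes to a strictly larger Tannakian subcategory of $\C$, and you correctly flag this lifting (in particular, that $\bar\E^G$ is genuinely Tannakian rather than merely super-Tannakian) as the delicate point. The paper sidesteps this entirely by invoking the \emph{core} machinery of \cite{drinfeld2010braided}: the neutral component $\D_e$ is the core of $\C$ in the sense of \cite[Section 5.4]{drinfeld2010braided}, hence weakly anisotropic by \cite[Corollary 5.19]{drinfeld2010braided}, and then \cite[Corollary 5.29]{drinfeld2010braided} gives at once that $(\D_e)_{pt}\cap((\D_e)_{pt})' = ((\D_e)_{ad})_{pt}$ is either $\vect$ or $\svect$. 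Feeding this dichotomy back into the arguments of Lemma \ref{lem3} and Proposition \ref{ExistNntrivTannSub} (now applied to $\D_e$) forces $\D_e$ to be pointed in the first case and $\FPdim(\D_e)_{ad}=2$ in the second, yielding cases (2) and (3) respectively.

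The lifting statement you are trying to establish is essentially what underlies the proof of \cite[Corollary 5.19]{drinfeld2010braided}, so your route is not incorrect; but citing the packaged DGNO result is shorter and, more importantly, dispenses with the super-Tannakian worry you raise at the end, since weak anisotropy already rules out any nontrivial Tannakian subcategory of $\D_e$ stable under braided autoequivalences without requiring you to lift it back to $\C$.
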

\begin{proof}
Suppose that $\FPdim \C_{ad} =2$. Then (1) holds true by Proposition
\ref{IsingTensorPointed}.  In the rest of our proof, we consider the case where
$\FPdim \C_{ad} >2$.

By Proposition \ref{ExistNntrivTannSub}, $\C$ has a non-trivial Tannakian
subcategory. Let $\D\cong \Rep(G)$ be a proper maximal Tannakian subcategory of
$\C$, and let $\D'$ be its  M\"{u}ger centralizer in $\C$. Then $\D$ is the
M\"{u}ger center of $\D'$. Let $\E=(\D')_G$ be the de-equivariantization of
$\D'$ by $\Rep(G)$. It is non-degenerate by \cite[Remark
2.3]{etingof2011weakly}. Since the dimension of $\D$ is a power of $2$  and
$\FPdim \E=\FPdim \C/4$,  we know that $\E$ is also an ASF modular category.

Let $\C_G$ be the de-equivariantization of $\C$ by $\Rep(G)$. Then
$\C_G=\oplus_{g\in G}(\C_G)_g$ has a faithful $G$-grading and the neutral
component $(\C_G)_e$ is non-degenerate. By \cite[Proposition
4.56]{drinfeld2010braided}, $(\C_G)_e\cong (\D')_G=\E$.

The fusion category $\E$ is the core of $\C$ in the sense of \cite[section
5.4]{drinfeld2010braided}. It is a weakly anisotropic braided fusion category by
\cite[Corollary 5.19]{drinfeld2010braided}. By \cite[Corollary
5.29]{drinfeld2010braided}, $\E_{pt}\cap(\E_{pt})'$ is either equivalent to
$\vect$, or to the category $\svect$ of super vector spaces.

If $\E_{pt}\cap(\E_{pt})'\cong\vect$ then $\E$ is pointed. In fact, if $\E$ is
not pointed then the proof of Proposition \ref{ExistNntrivTannSub} shows that
$\E_{pt}\cap(\E_{pt})'=\E_{pt}\cap \E_{ad}=(\E_{ad})_{pt}$ contains a
non-trivial $G$-stable  Tannakian subcategory. In this case, $\C\cong (\C_G)^G$
fits into the second class.

If $\E_{pt}\cap(\E_{pt})'\cong\svect$ then $(\E_{ad})_{pt}=\E_{pt}\cap
\E_{ad}=\E_{pt}\cap(\E_{pt})'$ has Frobenius-Perron dimension $2$. By the proof
of Proposition \ref{ExistNntrivTannSub}, we get that $\FPdim \E_{ad}=2$. Hence
Proposition \ref{IsingTensorPointed} shows that $\E$ is equivalent to a Deligne
tensor product $\I\boxtimes\B$, where $\I$ is an Ising fusion category and $\B$
is a modular pointed category.  In this case, $\C\cong (\C_G)^G$ fits into the
third class.
\end{proof}

\begin{remark} Let $n \geq 2$. Examples of strictly weakly integral ASF modular
categories of Frobenius-Perron dimension $d2^{2n}$, $d \geq 1$, which are not in
the class described in Theorem \ref{symm}~(1), are provided by the tensor
products
\begin{equation*}\I_1 \boxtimes \dots \boxtimes \I_n \boxtimes \B,
\end{equation*} where $\I_1, \dots, \I_n$ are Ising modular categories and $\B$
is a pointed modular category of dimension $d$.

Consider the case where $n = 2$ and $\B$ is trivial, that is, we have a tensor
product $\C = \I_1 \boxtimes \I_2$ of two Ising modular categories. It follows
from \cite[Lemma B.24]{drinfeld2010braided}  that $\C$ is equivalent to a
$\Zz_2$-equivariantization of a braided $\Zz_2$-crossed  fusion category
whose trivial homogeneous component is a pointed modular  category corresponding
to a certain metric group of order 4; hence $\C$ belongs to the class described
in Theorem \ref{symm}~(2). 
\end{remark}

\section{Acknowledgements}
The first author was partially supported by the Fundamental Research Funds for
the Central Universities (KYZ201564), the Natural Science Foundation of China
(11201231) and the Qing Lan Project. The second author was partially supported
by  CONICET and SeCYT--UNC

\end{document}